\newtheorem{theorem}[subsection]{Theorem}
\newtheorem*{theorem*}{Theorem}
\newtheorem{lemma}[subsection]{Lemma}
\newtheorem{proposition}[subsection]{Proposition}
\newtheorem{corollary}[subsection]{Corollary}
\newtheorem*{conjecture*}{Conjecture}
\newtheorem{remark}[subsection]{Remark}
\newcommand{\ie}{{\em i.e.}\ }
\newcommand{\confer}{{\em cf.}\ }
\newcommand{\opname}[1]{\operatorname{\mathsf{#1}}}
\renewcommand{\mod}{\opname{mod}\nolimits}
\newcommand{\Mod}{\opname{Mod}\nolimits}
\newcommand{\per}{\opname{per}\nolimits}
\newcommand{\Add}{\opname{Add}\nolimits}
\newcommand{\thick}{\opname{thick}\nolimits}
\newcommand{\Hom}{\opname{Hom}}
\newcommand{\cHom}{\mathcal{H}\it{om}}
\newcommand{\ten}{\otimes}
\newcommand{\lten}{\overset{\boldmath{L}}{\ten}}
\newcommand{\cc}{{\mathcal C}}
\newcommand{\cd}{{\mathcal D}}
\newcommand{\cs}{{\mathcal S}}
\newcommand{\cx}{{\mathcal X}}
\begin{document}

\title[Tilting complexes and simple objects]{On tilting complexes providing
derived equivalences that send simple-minded objects to simple objects}

\author{Steffen Koenig}
\address{Steffen Koenig\\Universit\"{a}t Stuttgart\\
Institut f\"ur Algebra und Zahlentheorie\\ Pfaffenwaldring 57\\ D-70569 Stuttgart\\
Germany} \email{skoenig@mathematik.uni-stuttgart.de}

\author{Dong Yang}
\address{
Dong Yang\\Max-Planck-Institut f\"ur Mathematik in Bonn\\Vivatsgasse
7\\53111 Bonn\\Germany} \email{yangdong98@mails.thu.edu.cn}

\date{Last modified on \today}%October 2008.

\begin{abstract}
Given a set of 'simple-minded' objects in a derived category,
Rickard constructed a complex, which over a symmetric algebra
provides a derived equivalence sending the 'simple-minded' objects
to simple ones. We characterise in terms of t-structures, when this
complex is a tilting complex, show that there is an associated
natural $t$-structure and we provide an alternative construction of
this complex in terms of $A_{\infty}$-structures. Our approach is
similar to that of Keller--Nicol\'as.
\end{abstract}

\maketitle

\tableofcontents

\section{Introduction}

The module category of a finite dimensional algebra, when seen as an
abelian category, has two natural 'generators': a projective
generator and the direct sum of a full set of simple modules.
Equivalences of abelian categories send progenerators to
progenerators and simples to simples. The derived module category of
an algebra, when seen as a triangulated category, has two kinds of
natural generators: each tilting complex 'generates' the category,
and the direct sum of a full set of simple modules does so, too.
Equivalences of derived categories send tilting complexes to tilting
complexes. It is, however, not clear what happens to simple modules
under derived equivalences. For symmetric algebras, Rickard
\cite{Rickard02} has shown that the 'group' of derived equivalences
acts transitively on the class of objects sharing with the simple
objects certain obvious conditions. Given such 'simple-minded'
objects, he explicitly constructed a tilting complex, and thus a
derived equivalence. This has been used extensively in modular
representation theory of finite groups. Rickard's construction,
using Milnor colimits, produces a complex for any algebra, not just
a symmetric one. To show that this complex is a tilting complex, the
assumption symmetric is used, and in general one cannot expect to
get a tilting complex.

This note addresses two questions in this context. First, we
characterise in terms of t-structures
(Section~\ref{s:construction-rickard}), when Rickard's construction
yields a tilting complex. On the way, we give new proofs of some
results by Rickard and by Al-Nofayee
\cite{Al-Nofayee07,Al-Nofayee09}, who also considered this problem
and obtained related results, in particular also extending Rickard's
main result to self-injective algebras, and constructing a
$t$-structure. Similar results are obtained by Keller and Nicol\'as
in~\cite{KellerNicolas10} in a different context. Secondly, we
provide in Section~\ref{s:alternative-construction} an alternative
construction of the same complex, in terms of
$A_{\infty}$-categories. This uses work of Keller and Lef\`evre
\cite{Lefevre03}. In an appendix we investigate some basic
properties of non-positively graded finite-dimensional dg algebras.
These properties are used in Section~\ref{s:construction-rickard} to construct the $t$-structure and to extend Rickard's result to self-injective algebras, and used in Section~\ref{s:alternative-construction} to show that the above
results are valid also in the more general setting of finite-dimensional
non-positive dg algebras.

\section*{Acknowledgements}
The second named author gratefully acknowledges financial support from
Max-Planck-Institut f\"ur Mathematik in Bonn. He is deeply grateful
to Bernhard Keller for valuable conversations on derived categories
of dg algebras and $A_{\infty}$-algebras.

\section{Notations and preliminaries} Throughout, $K$ will be an
algebraically closed field. All algebras, modules, vector spaces and
categories are over the base field $K$. For a category $\cc$, we
denote by $\Hom_{\cc}(X,Y)$ the morphism space from $X$ to $Y$,
where $X$ and $Y$ are two objects of $\cc$. We will omit the
subscript and write $\Hom(X,Y)$ when it does not cause confusion. By
abuse of notation, we will denote by $\Sigma$ the suspension
functors of all the triangulated categories appearing in this paper. 
For a triangulated category $\cc$ and a set $\cs$ of objects in $\cc$, let
$\thick(\cs)$ denote the smallest triangulated subcategory of $\cc$ containing 
objects in $\cs$ and stable for taking direct summands, and let $\Add(\cs)$ denote the smallest
full subcategory of $\cc$ containing all objects of $\cs$ and stable
for taking coproducts and direct summands.

For a finite-dimensional basic algebra $\Lambda$, let $\Mod\Lambda$
(respectively, $\mod\Lambda$) denote the category of right
$\Lambda$-modules (respectively, finite-dimensional right
$\Lambda$-modules), and let $\cd(\Mod\Lambda)$ (respectively,
$\cd^b(\mod\Lambda)$, $\cd^{-}(\mod\Lambda)$) denote the derived
category of $\Mod\Lambda$ (respectively, bounded derived category of
$\mod\Lambda$, bounded above derived category of $\mod\Lambda$). The
categories $\cd(\Mod(\Lambda))$, $\cd^{-}(\mod\Lambda)$ and
$\cd^b(\mod\Lambda)$ are triangulated with suspension functor the
shift functor. We view $\cd^{-}(\mod\Lambda)$ and
$\cd^b(\mod\Lambda)$ as triangulated subcategories of
$\cd(\Mod(\Lambda))$.

For a differential graded(=dg) algebra $A$, let $\cd(A)$ denote the
derived category of right dg $A$-modules, \confer~\cite{Keller94},
and let $\cd_{fd}(A)$ denote its full subcategory of dg $A$-modules
whose total cohomology is finite-dimensional. They are triangulated
categories with suspension functor the shift functor. Let $\per(A)=\thick(A_A)$, \ie the smallest triangulated
subcategory of $\cd(A)$ containing the free dg $A$-module of rank
$1$ and stable for taking direct summands. Let $A$ and $B$ be two dg
algebras. Then a triangle equivalence between $\cd(A)$ and $\cd(B)$
restricts to a triangle equivalence between $\per(A)$ and $\per(B)$
as well as a triangle equivalence between $\cd_{fd}(A)$ and
$\cd_{fd}(B)$. If $A$ is a finite-dimensional algebra viewed as a dg
algebra concentrated in degree 0, then $\cd(A)$ is exactly $\cd(\Mod
A)$, $\cd_{fd}(A)$ is $\cd^b(\mod A)$, and $\per(A)$ is triangle
equivalent to the homotopy category of bounded complexes of finitely
generated projective $A$-modules.

\section{Rickard's construction}\label{s:construction-rickard}

Let $\Lambda$ be a finite-dimensional basic $K$-algebra. In this
section we discuss a construction by Rickard~\cite{Rickard02}. The
same construction is studied by
Keller--Nicol\'as~\cite{KellerNicolas10} in the context of positive
dg algebras.

Let $r$ be the rank of the Grothendieck group of $\mod\Lambda$.
Following~\cite{KoenigLiu10} we say that a set of objects
$X_1,\ldots,X_r$ in the bounded derived category
$\mathcal{D}^{b}(\mod\Lambda)$ are \emph{simple-minded} if the
following conditions hold
\begin{itemize}
\item[(1)] $\Hom(X_i,\Sigma^m X_j)=0,~~\forall~m<0$,
\item[(2)] $\Hom(X_i,X_j)=\begin{cases} K & \text{if\ }i=j,\\
                                           0 & \text{otherwise},
                                           \end{cases}$
\item[(3)] $X_1,\ldots,X_r$ generates
$\mathcal{D}^{b}(\mod\Lambda)$, \ie~$\cd^b(\mod\Lambda)=\thick(X_1,\ldots,X_r)$.
\end{itemize}

In~\cite{Rickard02} Rickard constructed from $X_1,\ldots,X_r$ a set
of objects $T_1,\ldots,T_r$ as follows.

Set $X_i^{(0)}=X_i$. Suppose $X_i^{(n-1)}$ is constructed. For
$i,j=1,\ldots,r$ and $m<0$, let $B(j,m,i)$ be a basis of
$\Hom(\Sigma^m X_j,X_i^{(n-1)})$. Put
\[Z_i^{(n-1)}=\bigoplus_{m<0}\bigoplus_j \bigoplus_{B(j,m,i)}\Sigma^m
X_{j}\] and let $\alpha_i^{(n-1)}:Z_i^{(n-1)}\rightarrow
X_i^{(n-1)}$ be the map whose component corresponding to $f\in
B(j,m,i)$ is exactly $f$.

Let $X_i^{(n)}$ be a cone of $\alpha_i^{(n-1)}$ and form the
corresponding triangle
\[\xymatrix{X_i^{(n-1)}\ar[r]^{\alpha_i^{(n-1)}}
& X_i^{(n-1)}\ar[r]^{\beta_i^{(n-1)}}& X_i^{(n)}\ar[r]& \Sigma
Z_i^{(n-1)}.}\] Inductively we obtain a sequence of morphisms in
$\cd(\Mod\Lambda)$:
\[\xymatrix{X_i^{(0)}\ar[r]^{\beta_i^{(0)}} & X_i^{(1)}\ar[r] &\ldots\ar[r] &
X_i^{(n-1)}\ar[r]^{\beta_i^{(n-1)}} & X_i^{(n)}\ar[r] & \ldots.}\]
Let $T_i$ be the Milnor colimit of this sequence. Up to isomorphism,
$T_i$ is determined by the following triangle
\[\xymatrix{\bigoplus_{n\geq 0}X_i^{(n)} \ar[r]^{id-\beta} & \bigoplus_{n\geq 0}X_i^{(n)}\ar[r] & T_i\ar[r] &\Sigma \bigoplus_{n\geq 0}X_i^{(n)}}.\]

\subsection{Properties of $T_i$'s}\label{ss:properties}

The following properties of $T_i$'s were proved in~\cite{Rickard02}
for symmetric algebras $\Lambda$, but the proofs there also work in
general.

\begin{lemma}\label{L:T}
\begin{itemize}
\item[a)] (\cite[Lemma 5.4]{Rickard02}) For $1\leq i,j\leq r$, and
$m\in\mathbb{Z}$,
\[\Hom(X_j,\Sigma^m T_{i})=\begin{cases} K & \text{if }i=j\text{ and } m=0,\\
                                           0 & \text{otherwise}.
                                           \end{cases}\]

\item[b)] (\cite[Lemma 5.5]{Rickard02}) For each $1\leq i\leq r$, $T_i$ is
isomorphic to a bounded complex of finitely generated injectives.
>From now on we assume that $T_i$ is such a complex.

\item[c)] (\cite[Lemma 5.8]{Rickard02}) Let $C$ be an object of
$\mathcal{D}^{-}(\mod \Lambda)$. If $\Hom(C,\Sigma^m T_i)=0$ for all
$m\in\mathbb{Z}$ and all $1\leq i\leq r$, then $C=0$.
\end{itemize}
\end{lemma}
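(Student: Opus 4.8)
The plan is to establish (a) by a direct colimit computation, deduce (b) from it, and then reduce (c) to a generation statement. \textbf{(a).} Each $X_j$ is isomorphic to a bounded complex of finitely generated modules, hence has a projective resolution by finitely generated projectives bounded above; an easy induction on $n$ shows that all the $X_i^{(n)}$ are cohomologically bounded below by a constant independent of $n$ (the $Z_i^{(n-1)}$ involve only negative shifts of the $X_j$, which only raises the lower bound). Therefore $\Hom(X_j,\Sigma^m-)$ commutes with the coproduct $\bigoplus_n X_i^{(n)}$, and applying it to the Milnor colimit triangle, noting that $\id-\beta$ becomes injective on $\bigoplus_n\Hom(X_j,\Sigma^m X_i^{(n)})$, gives $\Hom(X_j,\Sigma^m T_i)\cong\colim_n\Hom(X_j,\Sigma^m X_i^{(n)})$. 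To evaluate this colimit, apply $\Hom(\Sigma^{-m}X_j,-)$ to the triangle $Z_i^{(n-1)}\to X_i^{(n-1)}\to X_i^{(n)}\to\Sigma Z_i^{(n-1)}$. The crucial input is that, by construction, $\alpha_i^{(n-1)}$ induces a split surjection on $\Hom(\Sigma^{m'}X_j,-)$ for every $m'<0$, its component at a basis vector $f$ of $\Hom(\Sigma^{m'}X_j,X_i^{(n-1)})$ being $f$ itself. Combined with condition~(1) — which kills $\Hom(\Sigma^{-m}X_j,Z_i^{(n-1)})$ and $\Hom(\Sigma^{-m}X_j,\Sigma Z_i^{(n-1)})$ for $m\le 0$, leaving only the $B(j,-1,i)$-indexed part of $\Hom(\Sigma^{-1}X_j,Z_i^{(n-1)})$, which maps isomorphically onto $\Hom(\Sigma^{-1}X_j,X_i^{(n-1)})$ — this yields: for $m<0$ all groups in sight vanish and the colimit is $\Hom(X_j,\Sigma^m X_i)=0$ by~(1); for $m=0$ each transition map is bijective, so the colimit is $\Hom(X_j,X_i)=\delta_{ij}K$ by~(2); for $m>0$ each transition map is zero, so the colimit vanishes.

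\textbf{(b).} The class of $C\in\cd^b(\mod\Lambda)$ for which $\RHom(C,T_i)$ has finite total dimension for all $i$ is a thick subcategory that, by~(a), contains every $X_j$; by condition~(3) it is all of $\cd^b(\mod\Lambda)$, so taking $C=\Lambda$ yields $T_i\cong\RHom_\Lambda(\Lambda,T_i)\in\cd_{fd}(\Lambda)=\cd^b(\mod\Lambda)$, and $T_i$ is moreover bounded below by the uniform lower bound on the $X_i^{(n)}$. Let $I^\bullet$ be the minimal injective coresolution of $T_i$; its $m$-th term is $\bigoplus_k I(S_k)^{\oplus\dim\Hom(S_k,\Sigma^m T_i)}$, so its terms are finitely generated (finite multiplicities by the preceding sentence), it vanishes below the bottom cohomological degree of $T_i$, and it vanishes above some degree because the thick subcategory of $C$ with $\Hom(C,\Sigma^m T_i)=0$ for $m\gg 0$ contains all $X_j$ by~(a), hence all $S_k$. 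Thus $T_i$ is isomorphic to the bounded complex of finitely generated injectives $I^\bullet$.

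\textbf{(c).} I would reduce this to the single non-formal fact that each $I(S_k)$ lies in $\thick(T_1,\dots,T_r)$. Granting it: if $\Hom(C,\Sigma^m T_i)=0$ for all $m,i$ then $C$ is right orthogonal to the thick subcategory $\thick(T_1,\dots,T_r)$, so $\Hom_{\cd(\Mod\Lambda)}(C,\Sigma^m I(S_k))=0$ for all $m,k$; since $I(S_k)$ is an injective module one has $\Hom_{\cd(\Mod\Lambda)}(C,\Sigma^m I(S_k))\cong\Hom_{\Mod\Lambda}(H^{-m}(C),I(S_k))$, so $\Hom_{\Mod\Lambda}(H^n(C),I(S_k))=0$ for all $n,k$, and as $\bigoplus_k I(S_k)$ is an injective cogenerator of $\mod\Lambda$ this forces $H^n(C)=0$ for all $n$, \ie $C=0$. (Concretely: a nonzero $C\in\cd^-(\mod\Lambda)$, via the composite $C\to\Sigma^{-t}H^t(C)\hookrightarrow\Sigma^{-t}I(H^t(C))$ with $t$ its top cohomological degree, always admits a nonzero morphism to a shift of some $I(S_k)$, and if $I(S_k)\in\thick(T_1,\dots,T_r)$ this morphism forces some $\Hom(C,\Sigma^{m'}T_i)\ne 0$.)

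The genuine obstacle is this last fact. By~(b), $\thick(T_1,\dots,T_r)\subseteq\ck^b$ — the homotopy category of bounded complexes of finitely generated injective $\Lambda$-modules — and $\ck^b=\thick(I(S_1),\dots,I(S_r))$, so the fact is equivalent to $\thick(T_1,\dots,T_r)=\ck^b$. Part~(a) plus the perfect Euler pairing between $K_0(\cd^b(\mod\Lambda))$ and $K_0(\ck^b)$ (under which $\{[S_k]\}$ and $\{[I(S_k)]\}$ are dual bases) shows at once that $[T_1],\dots,[T_r]$ is a $\Z$-basis of $K_0(\ck^b)$, dual to $\{[X_j]\}$; the work is to promote this to thick generation. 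I would check, from the construction together with~(a)--(b), that $\bigoplus_i T_i$ has no positive self-extensions, so that — being presilting with, by~(a), the right number of pairwise non-isomorphic indecomposable summands — it is a silting object of $\ck^b$ and therefore generates it; alternatively the generation can be read off from the $t$-structure attached to the simple-minded collection $X_1,\dots,X_r$, which appears to be the route the paper favours.
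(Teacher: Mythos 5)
Parts a) and b) of your proposal are correct, and they are in substance the arguments the paper relies on: the paper gives no proof of Lemma~\ref{L:T} itself, but cites \cite[Lemmas 5.4, 5.5 and 5.8]{Rickard02} and observes that Rickard's proofs nowhere use that $\Lambda$ is symmetric. Your colimit computation in a) (uniform lower cohomological bound to pass $\Hom(X_j,\Sigma^m-)$ through the coproduct; transition maps bijective for $m=0$, zero for $m>0$, all terms vanishing for $m<0$) and your minimal-injective-resolution argument in b) (finiteness of $\bigoplus_m\Hom(C,\Sigma^m T_i)$ for every $C$ in $\cd^b(\mod\Lambda)$ via a thick-subcategory argument and condition (3)) are essentially Rickard's Lemmas 5.4 and 5.5.

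Part c) has a genuine gap, which you flag yourself: everything is reduced to the claim that each $I(S_k)$ lies in $\thick(T_1,\ldots,T_r)$, equivalently that $T$ generates $\thick(I(S_1),\ldots,I(S_r))$, and this claim is never proved. It is at least as strong as c) itself (granted c) and Lemma~\ref{L:vanishingofposext} it follows a posteriori from compact generation, but it cannot be used as an input), and neither of your suggested repairs closes it. The silting route invokes the assertion that a presilting object with $r$ pairwise non-isomorphic indecomposable summands must be silting; this is not a theorem you may cite --- it is tied to the completion problem for presilting objects, which is not settled in this generality --- so it replaces the statement to be proved by something at least as hard (indecomposability and pairwise non-isomorphism of the $T_i$ do follow from a) together with condition (3), but that is not the issue). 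The $t$-structure route is circular within this paper: the $t$-structure of Section~\ref{ss:t-structure} and the equivalence $?\lten_{\tilde\Gamma}\nu^{-1}T$ rest on $\nu^{-1}T$ being a compact generator, which the paper deduces from Lemma~\ref{L:nuinverseT} c), i.e.\ from the very statement in question; making it honest would mean redoing the Keller--Nicol\'as/Al-Nofayee construction of the $t$-structure directly from $X_1,\ldots,X_r$, a substantial argument rather than a remark. The $K_0$-duality you mention cannot detect thick generation. By contrast, Rickard's proof of \cite[Lemma 5.8]{Rickard02}, which is what the paper appeals to, does not pass through generation at all: it works directly with the approximation triangles and the simple-minded conditions to produce, for a nonzero $C$ in $\cd^{-}(\mod\Lambda)$, a nonzero map to some $\Sigma^m T_i$, ultimately using that $\thick(X_1,\ldots,X_r)=\cd^b(\mod\Lambda)$ contains the injective cogenerator. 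To complete your c) you need either to reproduce an argument of that type or to give an actual proof of the generation statement.
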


Let $\nu$ be the Nakayama functor, and $\nu^{-1}$ the inverse
Nakayama functor ({\em cf.}~\cite[Chapter 1, Section
4.6]{Happel88}). They are quasi-inverse triangle equivalences
between the triangulated subcategories $\per(\Lambda)$ and
$\thick(\mathrm{D}(_{\Lambda}\Lambda))$ of $\cd(\Mod\Lambda)$, where
$\mathrm{D}=\Hom_{K}(?,K)$ is the duality functor. The following is
a consequence of Lemma~\ref{L:T} and the property of the Nakayama
functor.

\begin{lemma}\label{L:nuinverseT}
\begin{itemize}
\item[a)] For $1\leq i,j\leq r$, and $m\in\mathbb{Z}$,
\[\Hom(\nu^{-1}T_{i},\Sigma^m X_j)=\begin{cases} K & \text{if }i=j\text{ and } m=0,\\
                                           0 & \text{otherwise}.
                                           \end{cases}\]
\item[b)] For each $1\leq i\leq r$, $\nu^{-1}T_i$ is a
bounded complex of finitely generated projectives.
\item[c)] Let $C$
be an object of $\mathcal{D}^{-}(\mod \Lambda)$. If
$\Hom(\nu^{-1}T_i,\Sigma^m C)=0$ for all $m\in\mathbb{Z}$ and all
$1\leq i\leq r$, then $C=0$.
\end{itemize}
\end{lemma}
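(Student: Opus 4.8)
The plan is to deduce all three parts from Lemma~\ref{L:T} by means of a Serre-duality isomorphism for the Nakayama functor. First observe that, by Lemma~\ref{L:T}b), $T_i$ is isomorphic to a bounded complex of finitely generated injectives and hence lies in $\thick(\mathrm{D}(_{\Lambda}\Lambda))$; therefore $\nu^{-1}T_i$ makes sense and, because $\nu^{-1}$ restricts to a triangle equivalence $\thick(\mathrm{D}(_{\Lambda}\Lambda))\to\per(\Lambda)$, the complex $\nu^{-1}T_i$ is perfect. The key input I would establish is that for every perfect complex $P$ there is an isomorphism
\[\Hom_{\cd(\Mod\Lambda)}(P,Y)\cong\mathrm{D}\,\Hom_{\cd(\Mod\Lambda)}(Y,\nu P)\]
natural in $Y\in\cd^{-}(\mod\Lambda)$. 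Both sides are cohomological functors of $P$ on $\per(\Lambda)$, with values in finite-dimensional vector spaces since $P$ is a bounded complex of finitely generated projectives and $Y\in\cd^{-}(\mod\Lambda)$; so, once a natural transformation is written down, it suffices to check that it is an isomorphism for $P=\Lambda$. There $\nu\Lambda=\mathrm{D}(_{\Lambda}\Lambda)$ is injective and the statement reduces to the elementary identity $\Hom_{\cd}(Y,\mathrm{D}\Lambda)\cong\mathrm{D}\,H^{0}(Y)=\mathrm{D}\,\Hom_{\cd}(\Lambda,Y)$, which holds because $\Hom_{\Lambda}(-,\mathrm{D}\Lambda)=\mathrm{D}(-)$ on modules and $\mathrm{D}\Lambda$, being injective and concentrated in degree $0$, lets one compute the Hom-complex termwise. (Compare the Serre-duality statements in \cite{Happel88}.)

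Now substitute $P=\nu^{-1}T_i$, so that $\nu P=T_i$: the above becomes a natural isomorphism $\Hom(\nu^{-1}T_i,Y)\cong\mathrm{D}\,\Hom(Y,T_i)$ for all $Y\in\cd^{-}(\mod\Lambda)$. For a), take $Y=\Sigma^{m}X_j$, which lies in $\cd^{b}(\mod\Lambda)\subseteq\cd^{-}(\mod\Lambda)$; then
\[\Hom(\nu^{-1}T_i,\Sigma^{m}X_j)\cong\mathrm{D}\,\Hom(\Sigma^{m}X_j,T_i)=\mathrm{D}\,\Hom(X_j,\Sigma^{-m}T_i),\]
and by Lemma~\ref{L:T}a) this is $K$ when $i=j$ and $m=0$ and is $0$ otherwise. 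For c), take $Y=\Sigma^{m}C$ with $C\in\cd^{-}(\mod\Lambda)$; the hypothesis $\Hom(\nu^{-1}T_i,\Sigma^{m}C)=0$ for all $m$ and all $i$ then forces $\Hom(\Sigma^{m}C,T_i)=\Hom(C,\Sigma^{-m}T_i)=0$ for all $m$ and all $i$, so Lemma~\ref{L:T}c) gives $C=0$. Part b) is immediate from $\nu^{-1}T_i\in\per(\Lambda)$; to be concrete, since $T_i$ is a bounded complex of finitely generated injectives and $\nu^{-1}=\RHom_{\Lambda}(\mathrm{D}\Lambda,-)$ may be computed termwise on a complex of injectives, where it sends each finitely generated injective to a finitely generated projective, the complex $\nu^{-1}T_i$ is already a bounded complex of finitely generated projectives.

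The one step requiring care is the Serre-duality isomorphism of the first paragraph: one must have it in enough generality to apply it to the bounded-above complex $C$ appearing in c), not merely to objects of $\cd^{b}(\mod\Lambda)$, and one must know that $\nu^{-1}$ may legitimately be applied to $T_i$ at all --- which is exactly what Lemma~\ref{L:T}b) supplies. Once that is in place, parts a), b) and c) are direct translations of the corresponding statements of Lemma~\ref{L:T}.
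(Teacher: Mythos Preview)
Your proof is correct and follows the same approach the paper indicates: the paper simply states that the lemma is ``a consequence of Lemma~\ref{L:T} and the property of the Nakayama functor'', and your argument is precisely a careful unpacking of that sentence via the Serre-duality isomorphism $\Hom(P,Y)\cong\mathrm{D}\,\Hom(Y,\nu P)$ for perfect $P$. Your attention to the bounded-above case needed for part c) is appropriate and not a deviation from the paper's intent.
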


We put $T=\bigoplus_{i=1}^{r} T_i$ and
$\nu^{-1}T=\bigoplus_{i=1}^{r}\nu^{-1}T_i$.

\begin{lemma}\label{L:vanishingofposext}
We have \[\Hom(\nu^{-1}T,\Sigma^m T)=0\] for $m<0$. Equivalently,
\[\Hom(\nu^{-1}T,\Sigma^m \nu^{-1}T)=\Hom(T,\Sigma^m T)=0\] for $m>0$.
\end{lemma}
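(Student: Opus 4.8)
The plan is to prove the vanishing $\Hom(T,\Sigma^m T)=0$ for $m>0$ directly, and then to deduce the two other formulations from standard properties of the Nakayama functor. For the latter: since $\nu^{-1}T$ lies in $\per(\Lambda)$ by Lemma~\ref{L:nuinverseT}b) and $\nu$ restricts to a triangle equivalence $\per(\Lambda)\xrightarrow{\sim}\thick(\mathrm{D}({}_{\Lambda}\Lambda))$ sending $\nu^{-1}T$ to $T$, we get $\Hom(\nu^{-1}T,\Sigma^m\nu^{-1}T)\cong\Hom(T,\Sigma^m T)$; and the Auslander--Reiten formula $\Hom(M,\Sigma^m\nu P)\cong\mathrm{D}\Hom(P,\Sigma^{-m}M)$ for perfect $P$, applied with $P=M=\nu^{-1}T$ (so that $\nu P=T$), gives $\Hom(\nu^{-1}T,\Sigma^m T)\cong\mathrm{D}\Hom(\nu^{-1}T,\Sigma^{-m}\nu^{-1}T)$, which vanishes for $m<0$ exactly when $\Hom(\nu^{-1}T,\Sigma^k\nu^{-1}T)=0$ for $k>0$. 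Thus all three assertions are equivalent, and it suffices to establish $\Hom(T,\Sigma^m T)=0$ for $m>0$.

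For this, fix $i$ and $j$. First I would prove by induction on $n$ that $\Hom(X_j^{(n)},\Sigma^m T_i)=0$ for all $n\geq0$ and all $m\geq1$; the case $n=0$ is Lemma~\ref{L:T}a). For the inductive step, apply $\Hom(-,\Sigma^m T_i)$ to the triangle $Z_j^{(n-1)}\to X_j^{(n-1)}\to X_j^{(n)}\to\Sigma Z_j^{(n-1)}$: the term $\Hom(X_j^{(n-1)},\Sigma^m T_i)$ vanishes by the inductive hypothesis, while $\Hom(\Sigma Z_j^{(n-1)},\Sigma^m T_i)$ is a product of copies of groups $\Hom(X_{j'},\Sigma^{\ell}T_i)$ with $\ell\geq1$ (coming from the summands $\Sigma^{k}X_{j'}$ of $Z_j^{(n-1)}$ with $k<0$), all zero by Lemma~\ref{L:T}a); hence $\Hom(X_j^{(n)},\Sigma^m T_i)=0$. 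Feeding this into the long exact sequence obtained by applying $\Hom(-,\Sigma^m T_i)$ to the Milnor colimit triangle $\bigoplus_n X_j^{(n)}\xrightarrow{\id-\beta}\bigoplus_n X_j^{(n)}\to T_j\to\Sigma\bigoplus_n X_j^{(n)}$, and using $\Hom(\bigoplus_n X_j^{(n)},\Sigma^{\ell}T_i)=\prod_n\Hom(X_j^{(n)},\Sigma^{\ell}T_i)=0$ for $\ell\geq1$, one obtains $\Hom(T_j,\Sigma^m T_i)=0$ at once for all $m\geq2$.

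The case $m=1$ is the one delicate step, and I expect it to be the main obstacle. Here the same long exact sequence only identifies $\Hom(T_j,\Sigma T_i)$ with the cokernel of $\id-\beta^{*}$ acting on $\prod_n\Hom(X_j^{(n)},T_i)$. Since $\beta^{*}$ is precisely the shift operator of the inverse system $\bigl(\Hom(X_j^{(n)},T_i)\bigr)_n$ whose transition maps are precomposition with $\beta_j^{(n)}\colon X_j^{(n)}\to X_j^{(n+1)}$, this cokernel is $\varprojlim^1_n\Hom(X_j^{(n)},T_i)$. To conclude that it vanishes I would check that these transition maps $(\beta_j^{(n)})^{*}$ are surjective: applying $\Hom(-,T_i)$ to the triangle $Z_j^{(n)}\to X_j^{(n)}\to X_j^{(n+1)}\to\Sigma Z_j^{(n)}$ places the cokernel of $(\beta_j^{(n)})^{*}$ inside $\Hom(Z_j^{(n)},T_i)$, a product of copies of $\Hom(X_{j'},\Sigma^{-k}T_i)$ with $k<0$, which is zero by Lemma~\ref{L:T}a). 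An $\N$-indexed inverse system of abelian groups with surjective transition maps satisfies the Mittag--Leffler condition (one simply solves $x_n-(\beta_j^{(n)})^{*}(x_{n+1})=y_n$ recursively in $n$), so its $\varprojlim^1$ vanishes, giving $\Hom(T_j,\Sigma T_i)=0$. Summing over $i$ and $j$ yields $\Hom(T,\Sigma^m T)=0$ for all $m>0$, which together with the first paragraph proves the lemma.
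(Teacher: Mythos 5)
Your proof is correct, but it takes a genuinely different route from the one the paper intends. The paper's proof is a citation of Rickard's argument for \cite[Lemma 5.2]{Rickard02}, transported by putting $\nu^{-1}T_i$ in the first variable: since $\nu^{-1}T_i$ is perfect by Lemma~\ref{L:nuinverseT}b), hence compact, the functor $\Hom(\nu^{-1}T_i,-)$ commutes with the Milnor colimit defining $T_j$, so $\Hom(\nu^{-1}T_i,\Sigma^m T_j)\cong\colim_n\Hom(\nu^{-1}T_i,\Sigma^m X_j^{(n)})$, and these groups vanish for $m<0$ by an induction on $n$ (formally dual to your induction) using Lemma~\ref{L:nuinverseT}a) for the base case and for the summands of $Z_j^{(n-1)}$. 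This choice of variable makes the passage to the colimit trivial and avoids any derived-limit issue. You instead prove the equivalent statement $\Hom(T,\Sigma^m T)=0$ for $m>0$, which places the Milnor colimit in the contravariant variable; that is why you are forced into the $\varprojlim^1$ analysis at $m=1$, which you resolve correctly: the transition maps $(\beta_j^{(n)})^{*}$ are surjective because $\Hom(Z_j^{(n)},T_i)$ is a product of groups $\Hom(X_{j'},\Sigma^{-k}T_i)$ with $-k>0$, all zero by Lemma~\ref{L:T}a), and surjectivity of the transition maps kills $\varprojlim^1$. Your preliminary reduction identifying the three formulations via the equivalence $\nu\colon\per(\Lambda)\to\thick(\mathrm{D}({}_\Lambda\Lambda))$ and the Auslander--Reiten formula is also sound (note that $\Hom(\nu^{-1}T,\Sigma^m T)$ is finite-dimensional, since $\nu^{-1}T$ is a bounded complex of finitely generated projectives and $T$ one of finitely generated injectives, so the duality may be inverted). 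What your route buys is that the core computation uses only Lemma~\ref{L:T}a) and never invokes compactness of $\nu^{-1}T_i$; what it costs is the extra Mittag--Leffler step, which the paper's (Rickard's) choice of variable renders unnecessary.
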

\begin{proof} Same as the proof of~\cite[Lemma 5.2]{Rickard02}, with the
$T_i$ in the first entry of $\Hom$ there replaced by $\nu^{-1}T_i$.
\end{proof}

\begin{theorem}[\cite{Rickard02} Theorem 5.1] \label{T:case-symmetric} When $\Lambda$ is a symmetric algebra, $T=\nu^{-1}T$ is
a tilting complex.
\end{theorem}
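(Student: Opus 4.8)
The plan is to verify directly the three defining properties of a tilting complex for $T$. First I would dispose of the two easy points. Since $\Lambda$ is symmetric, the Nakayama functor is isomorphic to the identity, so $\nu^{-1}T_i\cong T_i$ for all $i$ and in particular $T=\nu^{-1}T$. By Lemma~\ref{L:T}(b) each $T_i$ is a bounded complex of finitely generated injective $\Lambda$-modules, and over a symmetric algebra these coincide with the finitely generated projectives; hence $T\in\per(\Lambda)$ and $\thick(T)\subseteq\per(\Lambda)$. Lemma~\ref{L:vanishingofposext} gives $\Hom(T,\Sigma^mT)=0$ for $m>0$, and applying that lemma with $\nu^{-1}T=T$ in the first variable gives $\Hom(T,\Sigma^mT)=\Hom(\nu^{-1}T,\Sigma^mT)=0$ for $m<0$. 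Thus $T$ is a pretilting complex (a perfect complex with $\Hom(T,\Sigma^mT)=0$ for all $m\neq0$), and everything reduces to the generation property $\thick(T)=\per(\Lambda)$.

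Next I would record two facts about the $r$ summands $T_1,\dots,T_r$ of $T$. They are pairwise non-isomorphic, directly from Lemma~\ref{L:T}(a), since $\Hom(X_j,T_i)$ is one-dimensional for $i=j$ and zero otherwise. They are also indecomposable: if $T_i=U\oplus V$ with $U$ the summand carrying the line $\Hom(X_i,T_i)$, then Lemma~\ref{L:T}(a) forces $\Hom(X_j,\Sigma^mV)=0$ for all $j$ and all $m$; as $V\in\per(\Lambda)\subseteq\cd^b(\mod\Lambda)=\thick(X_1,\dots,X_r)$ and an object of a triangulated category that is right orthogonal in every degree to a generating set must vanish, $V=0$. (One may also note, using that the Euler pairing $\langle P,C\rangle=\sum_m(-1)^m\dim_K\Hom(P,\Sigma^mC)$ between $K_0(\per\Lambda)$ and $K_0(\cd^b(\mod\Lambda))$ is perfect, that Lemma~\ref{L:nuinverseT}(a) yields $\langle T_i,X_j\rangle=\delta_{ij}$, so the classes $[T_i]$ form the $\Z$-basis of $K_0(\per\Lambda)$ dual to $\{[X_j]\}$.)

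It remains to prove $\thick(T)=\per(\Lambda)$, equivalently $\Lambda\in\thick(T)$; this is the heart of the matter. Since $T$ is a compact object of $\cd(\Mod\Lambda)$, this is in turn equivalent to the vanishing of $T^{\perp}=\{Y\in\cd(\Mod\Lambda):\Hom(T,\Sigma^mY)=0\text{ for all }m\}$: if $T^{\perp}=0$ then the localizing subcategory generated by $T$ is all of $\cd(\Mod\Lambda)$, whence $\thick(T)$ equals its subcategory of compact objects, which is $\per(\Lambda)$. Now Lemma~\ref{L:nuinverseT}(c), applied with $\nu^{-1}T=T$, says exactly that $T^{\perp}\cap\cd^{-}(\mod\Lambda)=0$. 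So the one remaining task --- and the step I expect to be the main obstacle --- is to reduce an arbitrary object of $T^{\perp}\subseteq\cd(\Mod\Lambda)$ to one lying in $\cd^{-}(\mod\Lambda)$. This is delicate, because cohomology modules of objects of $\cd(\Mod\Lambda)$ need not be finite-dimensional and the canonical truncations $\tau^{\le N}Y$ are only approximately orthogonal to the bounded complex $T$. Packaged differently, this reduction is the derived-Morita-theoretic fact that a pretilting complex with $r$ pairwise non-isomorphic indecomposable summands --- as $T$ has, by the previous paragraph --- is automatically a tilting complex; this is where Rickard's argument really does its work.
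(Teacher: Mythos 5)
Your first two paragraphs reproduce what the paper's proof does: since $\Lambda$ is symmetric the Nakayama functor is the identity, so $T=\nu^{-1}T$; $T$ is perfect by Lemma~\ref{L:T}(b) (injectives $=$ projectives); and Lemma~\ref{L:vanishingofposext} gives $\Hom(T,\Sigma^mT)=0$ for $m\neq 0$. Up to that point you are fine (the indecomposability/non-isomorphism of the $T_i$ and the Euler-form remark are correct but not needed). The problem is that you then stop exactly where the actual content lies: you never prove $\thick(T)=\per(\Lambda)$. The paper closes this by invoking Lemma~\ref{L:nuinverseT} to assert that $\nu^{-1}T$ is a compact generator of $\cd(\Mod\Lambda)$; part (c) of that lemma (which is Rickard's Lemma 5.8, available to you by hypothesis) is precisely the generation statement, packaged so that it only ever needs to be applied to objects of $\cd^{-}(\mod\Lambda)$. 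Concretely, the way such a lemma is used is not by extending $T^{\perp}\cap\cd^{-}(\mod\Lambda)=0$ to $T^{\perp}=0$ in the unbounded category by truncation (you are right that this is delicate for a complex with more than two nonzero terms); rather, one builds inside $\cd^{b}(\mod\Lambda)$ an approximation triangle $C'\to \mathrm{D}(\Lambda)\to C''$ with $C'\in\thick(T)$ and $C''$ left-orthogonal to all shifts of $T$, using that all relevant $\Hom$-spaces are finite-dimensional; the error term $C''$ then lies in $\cd^{b}(\mod\Lambda)\subseteq\cd^{-}(\mod\Lambda)$, so Lemma~\ref{L:T}(c) kills it and $\mathrm{D}(\Lambda)=\Lambda\in\thick(T)$. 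Some such argument (or the explicit citation of the compact-generation statement) has to appear; in your write-up it does not.

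Moreover, the principle you offer in its place --- that a pretilting complex with $r$ pairwise non-isomorphic indecomposable summands is automatically a tilting complex --- is not a theorem you can invoke, is not ``where Rickard's argument does its work,'' and is not known to be true in this generality (the analogous statement for presilting objects with the maximal number of summands fails, and the tilting version is at best a delicate open problem). The parenthetical $K_0$ observation does not rescue it either: a thick subcategory of $\per(\Lambda)$ whose objects span $K_0(\per\Lambda)$ need not be all of $\per(\Lambda)$, since thick subcategories are not detected by their image in $K_0$. So the proposal is incomplete at the one step that distinguishes the theorem from its hypotheses.
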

\begin{proof} By Lemma~\ref{L:nuinverseT}, $\nu^{-1}T$ is a compact
generator of $\mathcal{D}(\Mod\Lambda)$. Moreover, when $\Lambda$ is
symmetric, the Nakajama functor is isomorphic to the identity. The
desired result follows from Lemma~\ref{L:vanishingofposext}.
\end{proof}

In general, we may ask when $\nu^{-1}T$ is a tilting complex. If
this is the case, then by Rickard's Morita's theorem for derived
categories (\confer~\cite{Rickard89}) we have a triangle equivalence
\[\cd(\Mod\Lambda)\simeq \cd(\Mod\Gamma),\]
which takes $X_1,\ldots,X_r$ to a complete set of non-isomorphic
simple $\Gamma$-modules, where $\Gamma=\Hom(\nu^{-1}T,\nu^{-1}T)$.
Conversely, assume there is a finite-dimensional algebra $\Gamma$
with an equivalence $F:\cd(\Mod\Lambda)\simeq \cd(\Mod\Gamma)$
sending $X_1,\ldots,X_r$ to a complete set of non-isomorphic simple
$\Gamma$-modules. It follows from Lemma~\ref{L:nuinverseT} that for
$1\leq i,j\leq r$ and $m\in\mathbb{Z}$, we have
\[\Hom_{\mathcal{D}(\Mod\Gamma)}(F\nu^{-1}T_{i},\Sigma^m FX_j)=\begin{cases} K & \text{if }i=j\text{ and } m=0,\\
                                           0 & \text{otherwise}.
                                           \end{cases}\]
This means that $F\nu^{-1}T_i$ is the projective cover of $FX_i$,
and hence $F\nu^{-1}T=\Gamma$ is the free $\Gamma$-module of rank
$1$. Thus $\nu^{-1}T$ is a tilting complex.

\subsection{A $t$-structure}\label{ss:t-structure}

Let $\cc$ be a triangulated category. A \emph{$t$-structure} on
$\cc$ (\cite{BeilinsonBernsteinDeligne82}) is a pair $(\cc^{\leq
0},\cc^{\geq 0})$ of strictly full subcategories of $\cc$ such that
\begin{itemize}
\item[$\cdot$] $\Sigma\cc^{\leq 0}\subseteq\cc^{\leq 0}$ and
$\Sigma^{-1}\cc^{\geq 0}\subseteq\cc^{\geq 0}$;
\item[$\cdot$] $\Hom_{\cc}(M,\Sigma^{-1}N)=0$ for $M\in\cc^{\leq 0}$
and $N\in\cc^{\geq 0}$,
\item[$\cdot$] for each $M\in\cc$ there is a triangle $M'\rightarrow
M\rightarrow M''\rightarrow\Sigma M'$ in $\cc$ with $M'\in\cc^{\leq
0}$ and $M''\in\Sigma^{-1}\cc^{\geq 0}$.
\end{itemize}
The \emph{heart} $\cc^{\leq 0}~\cap~\cc^{\geq 0}$ is always abelian.
The $t$-structure $(\cc^{\leq 0},\cc^{\geq 0})$ is said to be
\emph{bounded} if $$\bigcup_{n\in\mathbb{Z}}\Sigma^n \cc^{\leq
0}=\cc=\bigcup_{n\in\mathbb{Z}}\Sigma^n\cc^{\geq 0}.$$ A typical
example of a $t$-structure is the pair $(\cd^{\leq 0},\cd^{\geq 0})$
for the derived category $\cd=\cd(\Mod A)$ of an (ordinary) algebra
$A$, where $\cd^{\leq 0}$ consists of complexes with vanishing
cohomologies in positive degrees, and $\cd^{\geq 0}$ consists of
complexes with vanishing cohomologies in negative degrees. This
$t$-structure restricts to a bounded $t$-structure of $\cd^b(\mod
A)$.

Assume $\Lambda$, $X_1,\ldots,X_r$, $T$ as in the preceding
subsection. Recall that by Lemma~\ref{L:nuinverseT}, $\nu^{-1}T$ is
a compact generator of $\mathcal{D}(\Mod\Lambda)$. The following
proposition is an immediate consequence of
Lemma~\ref{L:vanishingofposext} and the definition of the
$t$-structure.

\begin{proposition}\label{P:criterion}
The following assertions are equivalent:
\begin{itemize}
\item[(i)] $\nu^{-1}T$ is a tilting complex,
\item[(ii)] $\nu^{-1}T$ is in the heart of some $t$-structure.
\end{itemize}
\end{proposition}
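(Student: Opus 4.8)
The plan is to reduce both implications to the single vanishing statement
\[\Hom(\nu^{-1}T,\Sigma^m\nu^{-1}T)=0\qquad\text{for all }m<0,\]
which I will denote $(\ast)$ and which I claim is already equivalent to (i). Indeed, by Lemma~\ref{L:nuinverseT} the complex $\nu^{-1}T$ is a compact generator of $\cd(\Mod\Lambda)$, hence $\thick(\nu^{-1}T)=\per(\Lambda)$; therefore $\nu^{-1}T$ is a tilting complex exactly when $\Hom(\nu^{-1}T,\Sigma^m\nu^{-1}T)=0$ for every $m\neq 0$, and Lemma~\ref{L:vanishingofposext} disposes of the case $m>0$. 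So (i) holds if and only if $(\ast)$ does.

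For (i)$\Rightarrow$(ii) I would transport the standard $t$-structure along a derived equivalence. If $\nu^{-1}T$ is a tilting complex, Rickard's Morita theorem for derived categories --- applied, as in the discussion following Theorem~\ref{T:case-symmetric}, with $\Gamma=\End(\nu^{-1}T)$ --- gives a triangle equivalence $F\colon\cd(\Mod\Lambda)\iso\cd(\Mod\Gamma)$ with $F(\nu^{-1}T)\cong\Gamma$. Since $\Gamma$ lies in the heart of the standard $t$-structure on $\cd(\Mod\Gamma)$, pulling that $t$-structure back along $F$ produces a $t$-structure on $\cd(\Mod\Lambda)$ whose heart contains $F^{-1}(\Gamma)\cong\nu^{-1}T$; this is (ii).

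For (ii)$\Rightarrow$(i) I would invoke the standard orthogonality property of hearts. Suppose $\nu^{-1}T$ lies in the heart $\ch=\cc^{\leq 0}\cap\cc^{\geq 0}$ of some $t$-structure on $\cd(\Mod\Lambda)$. For $M,N\in\ch$ and $m<0$ we have $m+1\leq 0$, so iterating $\Sigma^{-1}\cc^{\geq 0}\subseteq\cc^{\geq 0}$ shows $\Sigma^{m+1}N\in\cc^{\geq 0}$; writing $\Sigma^m N=\Sigma^{-1}(\Sigma^{m+1}N)$ and applying the orthogonality axiom to $M\in\cc^{\leq 0}$ and $\Sigma^{m+1}N\in\cc^{\geq 0}$ gives $\Hom(M,\Sigma^m N)=0$. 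Taking $M=N=\nu^{-1}T$ yields $(\ast)$, hence (i).

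No genuine obstacle arises --- the authors are right that the proposition is an immediate consequence of Lemma~\ref{L:vanishingofposext} together with the $t$-structure axioms. The only point that requires a little attention is the reformulation in the first paragraph, which rests on the characterisation of tilting complexes and on the fact that in the compactly generated category $\cd(\Mod\Lambda)$ a compact generator thickly generates $\per(\Lambda)$; everything after that is formal.
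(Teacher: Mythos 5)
Your proof is correct and follows the same route the paper has in mind: reduce (i) to the vanishing of $\Hom(\nu^{-1}T,\Sigma^m\nu^{-1}T)$ for $m<0$ using compact generation and Lemma~\ref{L:vanishingofposext}, get (ii)$\Rightarrow$(i) from the orthogonality axiom of a $t$-structure, and get (i)$\Rightarrow$(ii) by transporting the standard $t$-structure along the derived equivalence to the endomorphism algebra. The paper states this as "immediate" without writing out the details; your write-up simply supplies them.
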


There are two natural $t$-structures related to the set
$X_1,\ldots,X_r$. Let $X^{\leq 0}$ be the smallest full subcategory
of $\mathcal{D}(\Mod\Lambda)$ containing $X_1,\ldots,X_r$ and stable
for taking suspensions, extensions and coproducts.
By~\cite[Proposition 3.2]{AlonsoJeremiasSouto03}, the pair $(X^{\leq
0},\Sigma(X^{\leq 0})^{\perp})$ is a $t$-structure of
$\mathcal{D}(\Mod\Lambda)$, where $(X^{\leq 0})^{\perp}$ is the full
subcategory of $\mathcal{D}(\Mod\Lambda)$ of objects $M$ such that
$\Hom(N,M)=0$ for any object $N$ of $X^{\leq 0}$. Dually so is
$(\Sigma^{-1}{}^{\perp}\hspace{-1pt}(X^{\geq 0}),X^{\geq 0})$, where
$X^{\geq 0}$ is defined as the smallest full subcategory of
$\mathcal{D}(\Mod\Lambda)$ which contains $X_1,\ldots,X_r$ and which
is stable for taking cosuspensions, extensions and products, and
${}^{\perp}\hspace{-1pt}(X^{\geq 0})$  is the full subcategory of
$\mathcal{D}(\Mod\Lambda)$ of objects $M$ such that $\Hom(M,N)=0$
for any object $N$ of $X^{\geq 0}$.

Yet there is a third natural $t$-structure. Let $\tilde\Gamma$ be
the dg endomorphism algebra of $\nu^{-1}T$. Precisely, the degree
$n$ component of $\tilde{\Gamma}$ consists of those $\Lambda$-linear
maps from $\nu^{-1}T$ to itself which are homogeneous of degree $n$,
and the differential of $\tilde{\Gamma}$ takes a homogeneous map $f$
of degree $n$ to $d\circ f-(-1)^n f\circ d$, where $d$ is the
differential of $\nu^{-1}T$. We have
$H^m(\tilde{\Gamma})=\Hom(\nu^{-1}T,\Sigma^m \nu^{-1}T)$ for any
integer $m$. The dg algebra $\tilde{\Gamma}$ is finite-dimensional
by Lemma~\ref{L:nuinverseT} b), and it has cohomology concentrated
in non-positive degrees by Lemma~\ref{L:vanishingofposext}. It
follows that the derived category $\cd(\tilde\Gamma)$ carries a
natural $t$-structure $(\cd^{\leq 0}, \cd^{\geq 0})$, where
$\cd^{\leq 0}$ is the full subcategory of $\cd(\tilde\Gamma)$
consisting of dg $\tilde{\Gamma}$-modules $M$ with $H^{m}(M)=0$ for
$m>0$, and $\cd^{\geq 0}$ is the full subcategory of
$\cd(\tilde\Gamma)$ consisting of dg $\tilde\Gamma$-modules $M$ with
$H^{m}(M)=0$ for $m<0$, and the heart $\cd^{\leq 0}\cap\cd^{\geq 0}$
is equivalent to $\Mod\Gamma$, where $\Gamma=H^{0}(\tilde\Gamma)$,
see the appendix. This $t$-structure restricts to a $t$-structure of
$\cd_{fd}(\tilde{\Gamma})$, denoted by $(\cd^{\leq 0}_{fd},\cd^{\geq
0}_{fd})$, whose heart is equivalent to $\mod\Gamma$.

The complex $\nu^{-1}T$ has a natural dg
$\tilde{\Gamma}$-$\Lambda$-bimodule structure. By~\cite[Lemma 6.1
(a)]{Keller94}, we have a triangle equivalence
\[?\lten_{\tilde\Gamma}\nu^{-1}T:\mathcal{D}(\tilde\Gamma)\stackrel{\sim}{\longrightarrow}\mathcal{D}(\Mod\Lambda).\]
This equivalence takes $\tilde\Gamma$ to $\nu^{-1}T$, takes a
complete set of non-isomorphic simple $\Gamma$-modules to
$X_1,\ldots,X_r$, and restricts to a triangle equivalence between
$\cd_{fd}(\tilde{\Gamma})$ and
$\cd_{fd}(\Lambda)=\cd^b(\mod\Lambda)$. The image of the
$t$-structure $(\cd^{\leq 0},\cd^{\geq 0})$ under the triangle
equivalence $?\lten_{\tilde\Gamma}\nu^{-1}T$ is a $t$-structure of
$\cd(\Mod\Lambda)$, which we still denote by $(\cd^{\leq
0},\cd^{\geq 0})$. The image of the $t$-structure $(\cd^{\leq
0}_{fd},\cd^{\geq 0}_{fd})$ is exactly the $t$-structure $(\cc^{\leq
0},\cc^{\geq 0})$ in~\cite{Al-Nofayee09}.

\begin{proposition}\label{p:t-structures} The above three $t$-structures $(X^{\leq 0},\Sigma(X^{\leq
0})^{\perp})$, $(\Sigma^{-1}{}^{\perp}\hspace{-1pt}(X^{\geq
0}),X^{\geq 0})$ and $(\cd^{\leq 0},\cd^{\geq 0})$ coincide.
\end{proposition}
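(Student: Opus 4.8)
The plan is to prove that the three $t$-structures have the same aisle — equivalently, the same coaisle — since a $t$-structure is determined by its aisle. Thus it is enough to show $X^{\leq 0}=\cd^{\leq 0}$ and $X^{\geq 0}=\cd^{\geq 0}$; the first equality identifies the first $t$-structure with the third and the second identifies the second with the third. One inclusion of each is immediate: the equivalence $?\lten_{\tilde\Gamma}\nu^{-1}T$ is triangulated and commutes with coproducts and with products, so the transported pair $(\cd^{\leq 0},\cd^{\geq 0})$ still has $\cd^{\leq 0}$ stable under suspensions, extensions and coproducts and $\cd^{\geq 0}$ stable under cosuspensions, extensions and products; as it carries the simple $\Gamma$-modules to $X_1,\dots,X_r$, these objects lie in the heart $\cd^{\leq 0}\cap\cd^{\geq 0}$, whence $X^{\leq 0}\subseteq\cd^{\leq 0}$ and $X^{\geq 0}\subseteq\cd^{\geq 0}$ by minimality of $X^{\leq 0}$ and $X^{\geq 0}$.

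For the reverse inclusions I would use the following description of the standard $t$-structure of a finite-dimensional non-positive dg algebra $A$, with $\Gamma=H^{0}(A)$, which is of the kind established in the appendix: $\cd^{\leq 0}(A)$ is the \emph{smallest} full subcategory of $\cd(A)$ containing the simple $\Gamma$-modules and stable under suspensions, extensions and coproducts, and dually $\cd^{\geq 0}(A)$ is the smallest one containing the simple $\Gamma$-modules and stable under cosuspensions, extensions and products. To prove the first assertion one first checks $\cd^{\leq 0}(A)=\opname{Susp}(A)$, the smallest cocomplete pre-aisle containing the free module $A$: given $N\in\cd^{\leq 0}(A)$ one sets $N_0=N$ and inductively picks a map $\bigoplus\Sigma^{k}A\to N_k$ that is surjective on $H^{-k}$, with cone $N_{k+1}$, so that $N_k\in\cd^{\leq -k}(A)$ and therefore $\opname{hocolim}_k N_k=0$; then $N\cong\opname{hocolim}_k\opname{fib}(N\to N_k)$, and each $\opname{fib}(N\to N_k)$ is a finite iterated extension of coproducts of the shifts $\Sigma^{j}A$ with $0\le j<k$, so $N\in\opname{Susp}(A)$. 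Since $A$ is finite-dimensional it is, in $\cd(A)$, a finite iterated extension of the objects $\Sigma^{-m}H^{m}(A)$ with $m\leq 0$, each a non-negative shift of a finite-dimensional $\Gamma$-module and hence built by finitely many extensions from the simple $\Gamma$-modules; thus $A$ lies in the subcategory generated by the simples, which is contained in $\cd^{\leq 0}(A)=\opname{Susp}(A)$, so the three subcategories coincide. The coaisle assertion follows by the same argument with $A$ replaced by the finite-dimensional dg module $\mathrm{D}A\in\cd^{\geq 0}(A)$, which in the dual tower plays the role of an injective cogenerator.

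Applying this with $A=\tilde\Gamma$ and transporting along $?\lten_{\tilde\Gamma}\nu^{-1}T$: this equivalence preserves suspensions, extensions and coproducts and sends the simple $\Gamma$-modules to $X_1,\dots,X_r$, so it carries $\cd^{\leq 0}(\tilde\Gamma)$ to the smallest full subcategory of $\cd(\Mod\Lambda)$ containing $X_1,\dots,X_r$ and stable under suspensions, extensions and coproducts — that is, to $X^{\leq 0}$ — and likewise it carries $\cd^{\geq 0}(\tilde\Gamma)$ to $X^{\geq 0}$. Hence $\cd^{\leq 0}=X^{\leq 0}$ and $\cd^{\geq 0}=X^{\geq 0}$, and the three $t$-structures coincide.

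The step that I expect to require real care is the description of $\cd^{\leq 0}(A)$ and $\cd^{\geq 0}(A)$ in terms of the simple $\Gamma$-modules: the naive induction that reassembles an object from its cohomology objects terminates only for cohomologically bounded objects, so one genuinely needs the homotopy-colimit (Milnor-colimit) construction above, and its dual via products, to deal with objects whose cohomology is unbounded below, respectively above. This is exactly the place where the non-positivity of $\tilde\Gamma$, \ie Lemma~\ref{L:vanishingofposext}, is used.
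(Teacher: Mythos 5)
Your argument is correct and follows the same route as the paper: reduce to the equalities $X^{\leq 0}=\cd^{\leq 0}$ and $X^{\geq 0}=\cd^{\geq 0}$, transport everything along $?\lten_{\tilde\Gamma}\nu^{-1}T$ to $\cd(\tilde\Gamma)$, identify the standard aisle there with the subcategory generated by the free module under suspensions, extensions and coproducts, and then use that the finite-dimensional non-positive dg algebra $\tilde\Gamma$ (resp. its dual $\mathrm{D}\tilde\Gamma$) is a finite iterated extension of non-negative (resp. non-positive) shifts of finite-dimensional $\Gamma$-modules, hence lies in the subcategory generated by the simples. The only real difference is how the key identification $\cd^{\leq 0}=\opname{Susp}(\tilde\Gamma)$ is justified: you prove it by hand with a Milnor-colimit tower, and you make the coaisle case explicit via $\mathrm{D}\tilde\Gamma$, products and homotopy limits (where the relevant $\lim$- and $\lim^1$-terms vanish because the towers of cohomologies are eventually zero or constant), whereas the paper invokes \cite[Theorem 3.1 (c)]{Keller94} applied to an $\Add(\Sigma^m H^*(\tilde\Gamma)\mid m\geq 0)$-resolution of $H^*(M)$ and dismisses the coaisle statement as dual; your rendering is more self-contained, the paper's shorter. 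One small presentational point: the inference ``$\opname{hocolim}_k N_k=0$, hence $N\cong\opname{hocolim}_k\opname{fib}(N\to N_k)$'' is not a formal consequence (Milnor colimits do not commute with fibres on the nose); it is cleaner to argue directly that for each fixed $m$ the map $H^m(\opname{fib}(N\to N_k))\to H^m(N)$ is an isomorphism for $k\gg 0$, since $H^m(N_k)=H^{m-1}(N_k)=0$, and that cohomology of a Milnor colimit is the colimit of the cohomologies -- this is exactly the standard argument you clearly have in mind, so it is a matter of wording rather than a gap.
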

\begin{proof} If suffices to prove that $X^{\leq 0}=\cd^{\leq 0}$
and $X^{\geq 0}=\cd^{\geq 0}$. We only prove the first statement,
and the second statement is dual. Let $Y^{\leq 0}$ be the image of
$X^{\leq 0}$ under a quasi-inverse of
$?\lten_{\tilde\Gamma}\nu^{-1}T$, \ie $Y^{\leq 0}$ is the smallest
full subcategory of $\cd(\tilde{\Gamma})$ containing the simple
$\Gamma$-modules and stable for taking suspensions, extensions and
coproducts. We shall prove the equivalent statement $Y^{\leq
0}=\cd^{\leq 0}$ in $\cd(\tilde\Gamma)$.

Let $M$ be a dg $\tilde\Gamma$-module whose cohomology is
concentrated in non-positive degrees.  Then the graded module
$H^*(M)$ over the non-positively graded algebra $H^*(\tilde\Gamma)$
admits an $\Add(\Sigma^m H^*(\tilde\Gamma)|m\geq 0)$-resolution. It
follows from~\cite[Theorem 3.1 (c)]{Keller94} that $M$ belongs to
the the smallest full subcategory of $\cd(\tilde\Gamma)$ containing
$\tilde\Gamma$ and stable for taking suspensions, extensions and
coproducts. Therefore, this latter category coincides with
$\cd^{\leq 0}$. But it is clear that $Y^{\leq 0}$ contains
$\tilde\Gamma$, and hence $Y^{\leq 0}$ contains $\cd^{\leq 0}$. The
inclusion in the other direction is obvious.
\end{proof}

An abelian category is a \emph{length category} if every object in
it has finite length. Two sets of simple-minded objects are
\emph{equivalent} if they have the same closure under extensions.
The following is a counterpart of~\cite[Corollary
11.5]{KellerNicolas10}.

\begin{corollary} There is a bijection between the set of bounded
$t$-structures of $\cd^b(\mod\Lambda)$ whose heart is a length
category with finite many simple objects (up to isomorphism) and the
set of equivalence classes of families of simple-minded objects of
$\cd^b(\mod\Lambda)$. In particular, the heart of a bounded
$t$-structure of $\cd^b(\mod\Lambda)$ is a length category if and
only if it is equivalent to $\mod\Gamma$ for some finite-dimensional
algebra $\Gamma$.
\end{corollary}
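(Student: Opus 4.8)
The plan is to build the bijection in both directions and check that the two assignments are mutually inverse. First, given a family of simple-minded objects $X_1, \ldots, X_r$, one produces a bounded $t$-structure on $\cd^b(\mod \Lambda)$ by restricting the $t$-structure $(\cd^{\leq 0}, \cd^{\geq 0})$ of Proposition~\ref{p:t-structures} to $\cd_{fd}$ — more precisely, one uses the $t$-structure $(\cd^{\leq 0}_{fd}, \cd^{\geq 0}_{fd})$ transported along $?\lten_{\tilde\Gamma}\nu^{-1}T$, whose heart is equivalent to $\mod\Gamma$ with $\Gamma = H^0(\tilde\Gamma)$. Since $\mod\Gamma$ for a finite-dimensional algebra $\Gamma$ is a length category with finitely many simple objects, this lands in the correct target set; and since the heart is equivalent to $\mod\Gamma$, this also immediately gives the "in particular" clause once surjectivity of the other direction is established. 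One must check that equivalent families of simple-minded objects (same extension closure) give the same $t$-structure: but by Proposition~\ref{p:t-structures} the aisle $X^{\leq 0}$ is generated under suspensions, extensions and coproducts by the $X_i$, so it depends only on the extension closure of $\{X_i\}$, hence the assignment is well-defined on equivalence classes.

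For the reverse direction, start with a bounded $t$-structure $(\cc^{\leq 0}, \cc^{\geq 0})$ on $\cd^b(\mod\Lambda)$ whose heart $\ca$ is a length category with finitely many simple objects $S_1, \ldots, S_r$ (up to isomorphism). The claim is that $S_1, \ldots, S_r$, viewed as objects of $\cd^b(\mod\Lambda)$, form a family of simple-minded objects. Condition (2) — that $\Hom(S_i, S_j)$ is $K$ for $i=j$ and $0$ otherwise — is Schur's lemma in the length category $\ca$, using that $K$ is algebraically closed so $\End_{\ca}(S_i)$ is a finite-dimensional division algebra, hence $K$. Condition (1) — vanishing of $\Hom(S_i, \Sigma^m S_j)$ for $m < 0$ — follows from the axioms of a $t$-structure: both $S_i$ and $S_j$ lie in the heart $\cc^{\leq 0} \cap \cc^{\geq 0}$, and $\Sigma^m S_j \in \cc^{\geq -m} \subseteq \cc^{\geq 1}$ for $m < 0$ while $S_i \in \cc^{\leq 0}$, so the Hom vanishes. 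Condition (3) — that the $S_i$ generate $\cd^b(\mod\Lambda)$ as a thick subcategory — uses boundedness of the $t$-structure: every object of $\cd^b(\mod\Lambda)$ has a finite filtration (its cohomology truncations) with subquotients shifts of objects of the heart $\ca$, and every object of the length category $\ca$ has a finite composition series with simple subquotients, so everything is built from the $S_i$ by triangles and shifts, hence lies in $\thick(S_1, \ldots, S_r)$.

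It then remains to verify that the two constructions are mutually inverse. Going from a simple-minded family to its $t$-structure and back: the simple objects of the heart of the transported $t$-structure $(\cd^{\leq 0}_{fd}, \cd^{\geq 0}_{fd})$ are, under the equivalence with $\mod\Gamma$, the simple $\Gamma$-modules, and the equivalence $?\lten_{\tilde\Gamma}\nu^{-1}T$ carries a complete set of simple $\Gamma$-modules back to $X_1, \ldots, X_r$ (this is stated in the paragraph preceding Proposition~\ref{p:t-structures}); so one recovers the original family up to isomorphism, hence up to equivalence. Going the other way: starting from a bounded $t$-structure with length heart $\ca$ having simples $S_1, \ldots, S_r$, one forms the associated simple-minded family, runs Rickard's construction to get $\nu^{-1}T$ and $\tilde\Gamma$, and must check the resulting $t$-structure equals the original one. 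By Proposition~\ref{p:t-structures}, the new aisle $\cd^{\leq 0}_{fd}$ is the extension-and-suspension closure of the $S_i$; the original aisle $\cc^{\leq 0} \cap \cd^b(\mod\Lambda)$ is also closed under extensions and suspensions and contains the $S_i$, and by the composition-series argument from step two every object of the original aisle is built from nonnegative shifts of the $S_i$ by extensions, giving one inclusion, while the original aisle is an aisle containing the $S_i$ and closed under the relevant operations, giving the other — so the aisles coincide and hence the $t$-structures coincide. The main obstacle I expect is this last coinductive comparison of aisles: one needs to argue carefully that an object $M$ in the original aisle $\cc^{\leq 0}$ actually lies in the smallest subcategory generated by nonnegative suspensions of the $S_i$ under extensions, which requires controlling the (a priori unbounded below, but bounded above) filtration of $M$ by truncations and noting that $\cc^{\leq 0}$, being part of a $t$-structure, is closed under the relevant infinite processes only in the ambient $\cd(\Mod\Lambda)$ — so one works there, transports via $?\lten_{\tilde\Gamma}\nu^{-1}T$, and then restricts back to $\cd_{fd}$ using that both $t$-structures are bounded. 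The "in particular" statement is then immediate: a bounded $t$-structure on $\cd^b(\mod\Lambda)$ has length heart iff it is in the image of the bijection iff its heart is equivalent to $\mod\Gamma$ for $\Gamma = H^0(\tilde\Gamma)$ finite-dimensional.
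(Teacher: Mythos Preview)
The paper states this corollary without proof, treating it as an immediate consequence of Proposition~\ref{p:t-structures} and the surrounding discussion (and as a counterpart of a result of Keller--Nicol\'as). Your proposal correctly supplies the details the paper leaves implicit, and the overall strategy---build both maps, check they are mutually inverse---is the right one.

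Two small points deserve attention. First, you silently assume that the heart has exactly $r$ simple objects (you index them $S_1,\ldots,S_r$), but the hypothesis only says finitely many. This needs one line: for a bounded $t$-structure the Grothendieck group of the heart is canonically isomorphic to $K_0(\cd^b(\mod\Lambda))\cong\mathbb{Z}^r$, and a length category with $s$ simples has Grothendieck group $\mathbb{Z}^s$, so $s=r$.

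Second, your worry at the end is unfounded and the proposed detour through $\cd(\Mod\Lambda)$ is unnecessary. You write that the truncation filtration of $M\in\cc^{\leq 0}$ is ``a priori unbounded below, but bounded above''. But the $t$-structure is assumed \emph{bounded}, which means $\bigcup_n\Sigma^n\cc^{\geq 0}=\cd^b(\mod\Lambda)$: every $M$ lies in some $\cc^{\geq -N}$, so the truncation filtration has only finitely many nonzero steps. Hence $M$ is a finite iterated extension of nonnegative shifts of objects of the heart $\ca$, each of which is a finite iterated extension of the $S_i$. No infinite colimits are involved, and you can work entirely inside $\cd^b(\mod\Lambda)$. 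In fact the cleanest way to finish is to observe that both hearts---the original $\ca$ and the new $\mod\Gamma$---equal the extension closure of $\{S_i\}$ in $\cd^b(\mod\Lambda)$, since each is a length category with these as its simple objects and hearts are extension-closed; a bounded $t$-structure is determined by its heart (the aisle being the closure of the heart under suspensions and extensions, by the finite-filtration argument just given), so the two $t$-structures coincide.
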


We remind the reader that the heart of a $t$-structure of
$\cd^b(\mod\Lambda)$ is not always a length category. For example,
the derived category of the path algebra of the Kronecker quiver has
a $t$-structure whose heart is the category of coherent sheaves over
the projective line, which is not a length category.

\subsection{The case of self-injective algebras}\label{ss:injective-algebra}
Al-Nofayee in~\cite{Al-Nofayee07} extended Rickard's result
Theorem~\ref{T:case-symmetric} to the case when $\Lambda$ is a self-injective
algebra; then $T=\nu^{-1}T$ is a tilting complex. This result can now be
derived again.

Let $\Lambda$ be a finite-dimensional self-injective algebra. In
this case, the two categories $\per(\Lambda)$ and
$\thick(\mathrm{D}(_{\Lambda}\Lambda))$ coincide. The Nakayama
functor $\nu$ and its quasi-inverse $\nu^{-1}$ can be extended to
auto-equivalences of $\cd^{-}(\mod\Lambda)$ because each object in
$\cd^{-}(\mod\Lambda)$ admits a projective resolution which is
bounded above and whose components are finite generated.

Let $X_1,\ldots,X_r$ be a set of simple-minded objects in
$\mathcal{D}^{b}(\mod\Lambda)$ stable under the Nakayama functor
$\nu$. Let $T$ be constructed as in
Section~\ref{s:construction-rickard}.
\begin{proposition}[\cite{Al-Nofayee07} Theorem 4] The complex $T$ is a tilting complex.
\end{proposition}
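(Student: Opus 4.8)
The plan is to reduce the self-injective case to the general machinery developed in Section~\ref{ss:t-structure} via Proposition~\ref{P:criterion}: it suffices to exhibit a $t$-structure whose heart contains $\nu^{-1}T = T$. Since $\Lambda$ is self-injective, $\nu$ and $\nu^{-1}$ are auto-equivalences of $\cd^{-}(\mod\Lambda)$, and by hypothesis the set $X_1,\ldots,X_r$ is stable under $\nu$; hence the set $\{\nu^{-1}X_1,\ldots,\nu^{-1}X_r\}$ coincides with $\{X_1,\ldots,X_r\}$ up to reindexing. The key point I would exploit is that Rickard's construction is functorial in the obvious sense, so applying $\nu^{-1}$ to the defining triangles of the $X_i^{(n)}$ and to the Milnor colimit triangle for $T_i$ shows that $\nu^{-1}T_i$ is precisely the object produced by Rickard's construction from the simple-minded set $\nu^{-1}X_1,\ldots,\nu^{-1}X_r$. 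But that set is the same as the original one, so $\nu^{-1}T \cong T$ (again up to permuting summands, which does not matter since we take the total sum $T=\bigoplus_i T_i$).

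First I would verify carefully that $\nu^{-1}$ commutes with the construction: one needs that $\nu^{-1}$ sends a cone to a cone (it is a triangle functor), that it preserves the finite-dimensional $\Hom$-spaces $\Hom(\Sigma^m X_j, X_i^{(n-1)})$ used to build $Z_i^{(n-1)}$ — here one uses that $\nu^{-1}$ is fully faithful on the relevant subcategory — and that it commutes with the countable coproduct $\bigoplus_{n\geq 0} X_i^{(n)}$ appearing in the Milnor colimit triangle. The last point is where I expect the main obstacle to lie: $\nu^{-1}$ as an auto-equivalence of $\cd^{-}(\mod\Lambda)$ need not a priori commute with infinite coproducts, since $\cd^{-}(\mod\Lambda)$ is not closed under them inside $\cd(\Mod\Lambda)$. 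I would handle this by observing that the Milnor colimit can be computed inside $\cd(\Mod\Lambda)$, where $\nu^{-1}$ (as $?\lten_\Lambda \nu^{-1}\Lambda$, or the derived functor of $\Hom_\Lambda(\mathrm{D}\Lambda,-)$ composed appropriately) is a coproduct-preserving triangle functor, and then note that the output $T_i$ lands in $\cd^b(\mod\Lambda)$ by Lemma~\ref{L:T}~b) anyway; alternatively one bounds the construction at a finite stage, as Rickard does, so that only finite coproducts occur.

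Once $\nu^{-1}T \cong T$ is established, the result is immediate: by Theorem~\ref{T:case-symmetric}'s proof scheme, $\nu^{-1}T$ is a compact generator of $\cd(\Mod\Lambda)$ by Lemma~\ref{L:nuinverseT}, and by Lemma~\ref{L:vanishingofposext} we have $\Hom(\nu^{-1}T,\Sigma^m\nu^{-1}T)=\Hom(T,\Sigma^m T)=0$ for $m\neq 0$ once we also know $\Hom(T,\Sigma^m T)=0$ for $m>0$; combined with $\nu^{-1}T=T$ this gives vanishing for all $m\neq 0$, so $\nu^{-1}T=T$ satisfies the defining conditions of a tilting complex. Equivalently, invoking Proposition~\ref{P:criterion}, the dg algebra $\tilde\Gamma$ has cohomology concentrated in degree $0$, so $T$ lies in the heart of the associated $t$-structure, whence it is a tilting complex. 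I would present the argument through Proposition~\ref{P:criterion} since the $t$-structure language is already set up and makes the logical dependence on the appendix transparent.
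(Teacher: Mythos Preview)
Your proposal reaches the correct conclusion $\nu^{-1}T\cong T$ and then finishes exactly as the paper does, but the route to that isomorphism is genuinely different. You argue by \emph{functoriality of Rickard's construction}: apply $\nu^{-1}$ through the tower of triangles and the Milnor colimit, using that for self-injective $\Lambda$ the Nakayama functor is tensoring with the invertible bimodule $\mathrm{D}\Lambda$ and hence extends to a coproduct-preserving auto-equivalence of $\cd(\Mod\Lambda)$. This is sound, though you must also note that the choice of bases $B(j,m,i)$ is immaterial (two bases differ by an automorphism of $Z_i^{(n-1)}$, so the cones are isomorphic, and the isomorphisms propagate through the tower).

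The paper instead argues by \emph{characterisation}. From Lemma~\ref{L:nuinverseT}~a) and $\nu$-stability of $\{X_j\}$ one gets $\Hom(T_i,\Sigma^m X_{\nu(j)})=\delta_{ij}\delta_{m0}K$. Transporting through the equivalence $\cd(\tilde\Gamma)\simeq\cd(\Mod\Lambda)$, this says each $GT_i$ has the Hom-pattern against simples that, by the appendix (Section~\ref{S:simpledg}), forces $GT_i\cong e_i\tilde\Gamma$; hence $GT\cong\tilde\Gamma\cong G(\nu^{-1}T)$ and so $T\cong\nu^{-1}T$. This sidesteps entirely the Milnor-colimit and basis-independence checks, at the cost of invoking the dg-algebra machinery already set up in Section~\ref{ss:t-structure}. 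Your approach is more self-contained and closer in spirit to Rickard's original argument; the paper's is shorter given its ambient framework.
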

\begin{proof}
The Nakayama functor $\nu$ induces a permutation on the set
$\{1,\ldots,r\}$, also denoted by $\nu$, given by
$X_{\nu(i)}=\nu(X_i)$ for $i=1,\ldots,r$. Applying $\nu$ to the
formula Lemma~\ref{L:nuinverseT} a), we obtain for $1\leq i,j\leq
r$, and $m\in\mathbb{Z}$,
\[\Hom(T_{i},\Sigma^m X_{\nu(j)})=\begin{cases} K & \text{if }i=j\text{ and } m=0,\\
                                           0 & \text{otherwise}.
                                           \end{cases}\]
Applying a quasi-inverse $G$ of the triangle equivalence
$?\lten_{\tilde{\Gamma}}\nu^{-1}T:\cd(\tilde{\Gamma})\rightarrow
\cd(\Lambda)$, we obtain
\[\Hom(GT_{i},\Sigma^m GX_{\nu(j)})=\begin{cases} K & \text{if }i=j\text{ and } m=0,\\
                                           0 & \text{otherwise}.
                                           \end{cases}\]
Since $GX_{\nu(1)},\ldots,GX_{\nu(r)}$ is a complete set of simple
$\tilde{\Gamma}$-modules, it follows from Section~\ref{S:simpledg} (the appendix) that $GT_1,\ldots,GT_r$ sum up
to the free module $\tilde{\Gamma}$. Recall that $G(\nu^{-1}T)\cong\tilde\Gamma$. As a
consequence, we have $T\cong\nu^{-1}T$. Now the desired result follows
from Lemma~\ref{L:vanishingofposext}.
\end{proof}

\section{An alternative
construction}\label{s:alternative-construction}

In this section we will give another construction of $T$ using the
$A_\infty$-version of Morita's theorem for triangulated categories
(\confer~\cite{Lefevre03}). Let us first recall the definition and
basic properties of $A_\infty$-algebras and $A_\infty$-modules.

\subsection{$A_\infty$-algebras and $A_\infty$-modules}
We follow~\cite{Lefevre03}. \cite{Keller01} and
\cite{LuPalmieriWuZhang08} are also nice references.

An \emph{$A_\infty$-algebra} is a graded vector space $A$ endowed
with a family of homogeneous maps
\[m_n: A^{\ten n}\longrightarrow A, n\geq 1\]
of degree $2-n$ satisfying the equations
\[\sum_{j+k+l=n}(-1)^{jk+l}m_{j+1+l}(id^{\ten j}\ten m_k\ten id^{\ten l})=0, n\geq 1.\]
These $m_n$ are called the \emph{multiplications} of $A$. The
$A_\infty$-algebra $A$ is \emph{minimal} if $m_1=0$. We say that $A$
is \emph{strictly unital} if $A$ has a \emph{strict unit}, \ie a
homogeneous element $1_A$ of degree $0$ such that for $n\neq 2$ the
multiplication $m_n$ has value zero if one of its $n$ arguments
equals $1_A$, and
\[m_2(1_A\ten a)=a=m_2(a\ten 1_A)\]
for all $a$ in $A$. %A homogeneous map $f:A\rightarrow A'$ of degree
%0 between two $A_\infty$-algebras $A$ and $A'$ is a \emph{strict
%$A_{\infty}$-morphism} if for all $n$,
%\[f\circ m_n=m_n\circ (f^{\ten n}).\]

Let $A$ be a strictly unital $A_\infty$-algebra. A \emph{(right)
$A_\infty$-module} over $A$ is a graded vector space $M$ endowed
with a family of homogeneous maps
\[m_n^M:M\ten A^{\ten n-1}\longrightarrow M, n\geq 1\]
of degree $2-n$ such that
\[\sum_{j+k+l=n}(-1)^{jk+l}m_{j+1+l}(id^{\ten j}\ten m_k\ten id^{\ten l})=0.\]
An $A_\infty$-module $M$ \emph{minimal} if $m_1^M=0$, and is
\emph{strictly unital} if one of $a_2,\ldots,a_n$ equals $1_A$
implies
\[m_n^M(m\ten a_2\ten
\cdots\ten a_n)=0\] for all $n\geq 3$, and $m_2^M(m\ten 1_A)=m$ for
all $m$ in $M$.

Let $M$ and $M'$ be two strictly unital $A_\infty$-modules over $A$.
An \emph{$A_\infty$-morphism} $f:M\rightarrow M'$ is a family of
homogeneous maps \[f_n:M\ten A^{\ten n-1}\longrightarrow M', n\geq
1\] of degree $1-n$ satisfying the following identity for all $n\geq
1$
\[\sum(-1)^{jk+l}f_{j+1+l}(id^{\ten j}\ten m_k\ten id^{\ten l})=\sum m_{s+1}(f_r\ten id^{\ten
s}),\] where $j+k+l=n$ and $r+s=n$. In particular, $f_1$ is a chain
map of complexes. The $A_\infty$-morphism $f$ is a
\emph{quasi-isomorphism} if $f_1$ induces identities on all
cohomologies. $f$ is \emph{strictly unital} if one of
$a_2,\ldots,a_n$ equals $1_A$ implies
\[f_n(m\ten a_2\ten\cdots\ten a_n)=0\]
for all $n\geq 2$. $f$ is \emph{strict} if $f_n=0$ for all $n\geq
2$.

Let $\Mod_{\infty}(A)$ be the category of strictly unital
$A_\infty$-modules over $A$ with strictly unital
$A_\infty$-morphisms as morphisms. The \emph{derived category}
$\cd(A)$ is the category obtained from $\Mod_{\infty}(A)$ by
formally inverting all quasi-isomorphisms. The category $\cd(A)$ is
a triangulated category whose suspension functor $\Sigma$ the shift
functor. For a strictly unital $A_\infty$-module $M$ over $A$ and an
integer $i$, we have
\[\Hom_{\cd(A)}(A,\Sigma^i M)=H^iM.\]

Let $\per(A)$ denote the triangulated subcategory of $\cd(A)$
generated by the free module of rank $1$, and $\cd_{fd}(A)$ denote
the full subcategory of $\cd(A)$ consisting of those
$A_\infty$-modules whose total cohomology is finite-dimensional.

\begin{lemma} Let $A$ and $B$ be two strictly unital
$A_\infty$-algebras. A triangle equivalence $\cd(A)\rightarrow
\cd(B)$ restricts to triangle equivalences $\per(A)\rightarrow
\per(B)$ and $\cd_{fd}(A)\rightarrow \cd_{fd}(B)$.
\end{lemma}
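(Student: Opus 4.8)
The plan is to give intrinsic descriptions of $\per(A)$ and $\cd_{fd}(A)$ inside $\cd(A)$ — descriptions referring only to the triangulated structure together with dimensions of $\Hom$-spaces — and then to observe that any triangle equivalence respects these descriptions. This mirrors the dg-algebra situation recalled in Section~2.

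For $\per(A)$, the first step is to check that $A$ is a compact generator of $\cd(A)$. Compactness: since $\Hom_{\cd(A)}(A,\Sigma^iM)=H^iM$ and coproducts in $\cd(A)$ are computed as direct sums of $A_\infty$-modules, on which $m_1$ acts componentwise, cohomology — hence $\Hom_{\cd(A)}(A,-)$ — commutes with coproducts. Generation: if $H^iM=0$ for all $i$ then $0\to M$ is a quasi-isomorphism, so $M\cong0$ in $\cd(A)$. Consequently $\cd(A)$ is compactly generated with compact generator $A$, and by the standard fact that a thick subcategory containing a compact generator already contains every compact object, $\per(A)=\thick(A_A)$ equals the subcategory $\cd(A)^c$ of compact objects. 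Since an equivalence of categories preserves coproducts, a triangle equivalence $F\colon\cd(A)\to\cd(B)$ carries compact objects to compact objects and is essentially surjective onto them; hence it restricts to a triangle equivalence $\per(A)\to\per(B)$.

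For $\cd_{fd}(A)$, I would prove the characterisation: for $M\in\cd(A)$,
\[M\in\cd_{fd}(A)\quad\Longleftrightarrow\quad\bigoplus_{i\in\Z}\Hom_{\cd(A)}(P,\Sigma^iM)\text{ is finite-dimensional for every }P\in\per(A).\]
The implication $\Leftarrow$ is immediate on taking $P=A$, since the left-hand sum is then the total cohomology of $M$. For $\Rightarrow$, given $M\in\cd_{fd}(A)$, the full subcategory $\cp$ of those $P$ for which $\bigoplus_i\Hom(P,\Sigma^iM)$ is finite-dimensional is closed under shifts and summands, and it is triangulated: finite-dimensionality passes to subspaces and quotient spaces and is closed under extensions, so for a triangle $P'\to P\to P''\to\Sigma P'$ the long exact $\Hom$-sequence forces $\bigoplus_i\Hom(P,\Sigma^iM)$ to be finite-dimensional once $\bigoplus_i\Hom(P',\Sigma^iM)$ and $\bigoplus_i\Hom(P'',\Sigma^iM)$ are. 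Since $A\in\cp$, we get $\per(A)=\thick(A_A)\subseteq\cp$. Now the characterisation is transported by $F$: it is fully faithful and commutes with $\Sigma$, and by the previous step $F$ maps $\per(A)$ essentially surjectively onto $\per(B)$, so $M\in\cd_{fd}(A)$ iff $FM\in\cd_{fd}(B)$; hence $F$ restricts to a triangle equivalence $\cd_{fd}(A)\to\cd_{fd}(B)$.

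I expect the only genuine obstacle to be the $A_\infty$-theoretic input used in the $\per$-step — namely that coproducts in $\cd(A)$ are honest direct sums of strictly unital $A_\infty$-modules (so that cohomology commutes with them) and that $\per(A)$ coincides with the compact objects. Both are part of the foundational package for $A_\infty$-modules, taken from~\cite{Lefevre03}; once they are in place the remainder of the argument is formal and parallels the classical dg case.
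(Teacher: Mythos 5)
Your argument is correct, and in fact the paper offers no proof of this lemma at all -- it is stated as a known fact, just like the analogous assertion for dg algebras in Section~2 -- so you are supplying a proof rather than diverging from one. What you give is the standard (and surely the intended) argument: $\per(A)=\thick(A_A)$ is intrinsically the subcategory of compact objects because $A$ is a compact generator, and $\cd_{fd}(A)$ is intrinsically the subcategory of objects $M$ with $\bigoplus_i\Hom(P,\Sigma^iM)$ finite-dimensional for all perfect $P$; both descriptions are visibly preserved by any triangle equivalence. Two small points to make explicit. First, your argument uses that $\per(A)$ is \emph{thick}, i.e.\ closed under direct summands; this matches the convention of Section~2 ($\per(A)=\thick(A_A)$) and should be taken as the definition here too, since the subcategory merely ``generated by'' $A$ without idempotent completion need not coincide with the compacts. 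Second, the foundational inputs you flag -- that $\cd(A)$ has coproducts computed as direct sums of strictly unital $A_\infty$-modules (so $\Hom(A,-)=H^0$ commutes with them), and that the Brown--Neeman characterisation of compacts applies -- are available because $\cd(A)$ is triangle equivalent to the derived category of the enveloping dg algebra $U(A)$ of \cite{Lefevre03}, which reduces everything to the classical dg case. With those references in place your proof is complete.
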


\begin{proposition}[\cite{Lefevre03} Proposition 3.3.1.7]\label{p:minimal-model}
Let $A$ be a strictly unital $A_\infty$-algebra, and $M$ be a
strictly unital $A_\infty$-module over $A$. Then there is a strictly
unital minimal $A_\infty$-module $M'$ over $A$ together with a
quasi-isomorphism of strictly unital $A_\infty$-modules from $M'$ to
$M$.
\end{proposition}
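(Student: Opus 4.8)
The statement is the minimal-model (homotopy transfer) theorem for $A_\infty$-modules, and the plan is to prove it exactly as one proves Kadeishvili's theorem for $A_\infty$-algebras: construct an explicit contraction of the underlying complex of $M$ onto its cohomology, then transfer the module structure along that contraction.

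First I would set up the contraction. The $A_\infty$-module identities in low degrees force $(m_1^M)^2=0$, so $d:=m_1^M$ is a differential on the graded vector space $M$. Since $K$ is a field, choose a splitting $M=L\oplus B\oplus H$ of graded vector spaces with $B=\operatorname{im}(d)$, with $H$ a complement of $B$ inside $\ker(d)$, and with $L$ a complement of $\ker(d)$ in $M$; then $d$ restricts to an isomorphism $L\iso B$. Put $M':=H$ with $m_1^{M'}:=0$; let $i\colon H\hookrightarrow M$ be the inclusion and $p\colon M\twoheadrightarrow H$ the projection along $B\oplus L$. These are chain maps with $pi=\mathrm{id}_{M'}$ realising the canonical identification $H\cong H^{*}(M)$. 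Define $h\colon M\to M$ of degree $-1$ by $h|_{H\oplus L}=0$ and $h|_{B}=(d|_{L})^{-1}$; one checks directly that $\mathrm{id}_{M}-ip=dh+hd$ and that the side conditions $h^{2}=0$, $hi=0$, $ph=0$ hold, so $(i,p,h)$ is a contraction of $(M,d)$ onto $(M',0)$.

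Next I would transfer the structure by the homological perturbation lemma. For $n\geq 2$ define $m_n^{M'}\colon M'\otimes A^{\otimes n-1}\to M'$ as the signed sum (with the usual Koszul signs) over planar rooted trees with $n$ inputs: the distinguished path from the root to the leftmost (``module'') input carries the operations $m_{k}^{M}$, the remaining internal vertices carry the operations $m_{k}$ of $A$, the module input enters through $i$, the internal edges of the distinguished path carry $h$, and the root edge carries $p$. Likewise set $f_1:=i$ and, for $n\geq 2$, let $f_n\colon M'\otimes A^{\otimes n-1}\to M$ be the same sum with $p$ at the root replaced by $h$. The standard verification — expand the $A_\infty$-relations that $(m_n^{M'})_{n\ge 1}$, resp.\ $(f_n)_{n\ge 1}$, are required to satisfy, organize the resulting trees according to their top vertex on the distinguished path, substitute $dh+hd=\mathrm{id}_M-ip$, and apply the $A_\infty$-relations of $M$ and $A$ — shows that $(m_n^{M'})_{n\geq 1}$ is an $A_\infty$-module structure on $M'=H$ and that $(f_n)_{n\geq 1}$ is an $A_\infty$-morphism from $M'$ to $M$ with $f_1=i$; since $i$ is a quasi-isomorphism of complexes, $f$ is a quasi-isomorphism of $A_\infty$-modules.

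Finally, strict unitality. The transferred data is a priori only homotopy-unital, and upgrading it to strict unitality is the step that needs care. I would check that every tree-term containing the unit $1_A$ among the $A$-inputs either vanishes or cancels: strict unitality of $A$ kills any internal $A$-vertex $m_{k}$ with $k\geq 3$ that is fed a $1_A$, while $m_{2}(1_A\otimes-)=m_{2}(-\otimes 1_A)=\mathrm{id}_A$, and the surviving terms cancel in pairs using $hi=0$, $ph=0$, $pi=\mathrm{id}_{M'}$ and the strict unitality of the $m_{n}^{M}$. Hence $m_{n}^{M'}$ vanishes on $1_A$-inputs for $n\geq 3$ and $f_n$ does so for $n\geq 2$, and one may set $m_{2}^{M'}(-\otimes 1_A):=\mathrm{id}_{M'}$ and $f_{2}(-\otimes 1_A):=0$ to obtain a genuinely strictly unital minimal model; alternatively, one invokes the equivalence between strictly unital and homotopy-unital $A_\infty$-structures established in~\cite{Lefevre03}. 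The main obstacle is precisely this unitality bookkeeping, together with controlling the signs in the tree formulas; the existence of the transferred structure itself is routine homotopy transfer.
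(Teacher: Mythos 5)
The paper does not actually prove this proposition; it is quoted verbatim from Lef\`evre-Hasegawa's thesis, so there is no internal proof to compare against. Your proposal supplies the standard homotopy-transfer argument, and its overall architecture is sound: the contraction $(i,p,h)$ you build, together with the side conditions $h^2=0$, $hi=0$, $ph=0$, is exactly what makes both the transfer and the strict-unitality bookkeeping work. In particular, your treatment of unitality is essentially complete as sketched: any spine vertex $m_k^M$ fed a $1_A$ vanishes for $k\geq 3$ by strict unitality of $M$, while for $k=2$ it acts as the identity and the adjacent edge labels then kill the term via $h^2=0$, $hi=0$ or $ph=0$, leaving only $m_2^{M'}(x\otimes 1_A)=pi(x)=x$ and $f_2(x\otimes 1_A)=hi(x)=0$; no re-definition or appeal to the homotopy-unital/strictly-unital comparison is actually needed.

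The one concrete flaw is your tree formula. For transferring a module structure over a \emph{fixed} $A_\infty$-algebra $A$ (which is not being contracted), the transferred operations are sums over ``linear'' trees only: a spine of vertices labelled by the $m_k^M$, joined by edges labelled $h$, with $i$ at the module input and $p$ (resp.\ $h$ for the $f_n$) at the root, and with the $A$-inputs attached directly to the spine as leaves. There are no internal vertices carrying the multiplications $m_k$ of $A$. One sees this from the perturbation lemma applied to $M\otimes B(A)$: the part of the differential encoding the $m_k$ of $A$ can be absorbed into the \emph{unperturbed} differential (the cross terms with $h\otimes\mathrm{id}$ cancel), so the perturbation consists only of the higher $m_k^M$, and any term of the series in which the bar differential of $A$ appears next to $h$, $i$ or $p$ dies by the side conditions. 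A term such as $p\,m_2^M\bigl(i(x)\otimes m_2(a_1\otimes a_2)\bigr)$, which your description would include in $m_3^{M'}$, is genuinely nonzero in general and does not belong to the transferred structure; with such terms present the ``standard verification'' of the $A_\infty$-module identities you invoke would fail. Deleting the internal $A$-vertices from your tree sum repairs the argument, and everything else you wrote then goes through.
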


\begin{theorem}[\cite{Lefevre03} Theorem 7.6.0.6]\label{t:morita}
Let $\cc$ be an algebraic triangulated category, \ie $\cc$ is
triangle equivalent to the stable category of a Frobenius category.
Assume $\cc$ has split idempotents and $\cc$ is generated by an
object $X$. Then there is a strictly unital minimal
$A_\infty$-algebra $A$ with strict unit $id_X$ and a triangle
equivalence
\[\cc\longrightarrow \per(A)\] taking $X$ to $A$.
It follows that the underlying graded algebra of $A$ is the graded
endomorphism algebra
$\bigoplus_{i\in\mathbb{Z}}\Hom_{\cc}(X,\Sigma^i X)$. Moreover,
$m_n(a_1\ten\cdots\ten a_n)=0$ for all $n\neq 2$ if one of $a_j$ is
the identity morphism of a direct summand of $X$.
\end{theorem}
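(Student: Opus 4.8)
The plan is to transport $\cc$ into the derived category of a dg algebra, apply Keller's dg Morita theorem, and then replace that dg algebra by a minimal $A_\infty$-model. Since $\cc$ is algebraic, the standard dg enhancement of such a category (see~\cite{Keller94}) allows us to form the dg endomorphism algebra $B=\RHom(X,X)$ of $X$; this is a dg algebra with $H^*(B)\cong\bigoplus_{i\in\Z}\Hom_\cc(X,\Sigma^i X)$ as graded algebras. Since $X$ generates $\cc$ and $\cc$ has split idempotents we have $\cc=\thick(X)$, so by~\cite[Lemma 6.1 (a)]{Keller94} the derived tensor functor $?\lten_B X$ restricts to a triangle equivalence $\per(B)\iso\cc$ sending the free module $B$ to $X$.

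Next I would pass to a minimal model. By the homotopy transfer theorem the graded space $A:=H^*(B)=\bigoplus_{i\in\Z}\Hom_\cc(X,\Sigma^i X)$ carries a minimal $A_\infty$-structure whose $m_2$ is the composition of morphisms, together with an $A_\infty$-quasi-isomorphism $A\to B$; by the strictly unital version of this result in~\cite{Lefevre03} the transfer can be arranged so that $A$ is strictly unital with strict unit $id_X\in A^0$. Such an $A_\infty$-quasi-isomorphism induces a triangle equivalence $\cd(A)\iso\cd(B)$ (\cite{Lefevre03}), which restricts to $\per(A)\iso\per(B)$ by the preceding lemma. Composing with the equivalence of the first paragraph yields the desired triangle equivalence $\cc\iso\per(A)$ taking $X$ to $A$, and its underlying graded algebra is $\bigoplus_{i\in\Z}\Hom_\cc(X,\Sigma^i X)$ by construction.

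It remains to verify the last assertion, which is the only genuinely delicate point. Write $X=\bigoplus_k X_k$ as a finite direct sum and let $e_k\in B^0$ be the identities of the summands $X_k$; these are closed, mutually orthogonal idempotents with $\sum_k e_k=id_X$, spanning a separable subalgebra $R=\prod_k K e_k$ of $B^0$. Because $R$ is separable, the deformation-retract data $(p\colon B\to A,\ \iota\colon A\to B,\ h\colon B\to B)$ used in the transfer can be averaged so as to become $R$-bilinear, and then normalised so that in addition $p(id_X)=id_X$, $\iota(id_X)=id_X$ and $h(id_X)=0$; together with $R$-bilinearity these relations force $p(e_k)=\iota(e_k)=e_k$ and $h(e_k)=0$ for every $k$. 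With such data, a routine inspection of the planar-tree formula for the transferred multiplications shows that every tree with a leaf labelled by some $e_k$ contributes zero, except for the single binary tree computing $m_2$. Hence $m_n(a_1\ten\cdots\ten a_n)=0$ for $n\neq2$ whenever one of the $a_j$ is the identity of a direct summand of $X$. The main obstacle is precisely this last step --- producing transfer data that is simultaneously a strict-unit splitting and compatible with the idempotent decomposition of $X$; this refinement of the strictly unital homotopy transfer is what we import from~\cite{Lefevre03}.
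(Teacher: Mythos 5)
Your sketch is correct and follows essentially the same route as the source: the paper states this theorem as an import from \cite{Lefevre03} without giving its own proof, and what you outline (dg enhancement of the algebraic category, Keller's Morita theorem identifying $\thick(X)$ with $\per$ of the dg endomorphism algebra, then a strictly unital minimal model via homotopy transfer) is precisely the argument of that reference. Your handling of the final assertion --- arranging the transfer data to be bilinear over the separable subalgebra spanned by the idempotents $e_k$ --- is the right mechanism and is equivalent to Lef\`evre's formulation in terms of strictly unital $A_\infty$-categories with objects the summands $X_k$, so no gap remains beyond what is legitimately delegated to \cite{Lefevre03}.
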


\subsection{Minimal positive $A_\infty$-algebras}

Let $A$ be a strictly unital minimal positive $A_\infty$-algebra,
\ie $A$ is strictly unital and minimal and satisfies
\begin{itemize}
\item $A^i=0$ for all negative integers $i$,
\item $A^0$ is the product of $r$ copies of the base field $K$ for some positive integer $r$,
\item
$m_n(a_1\ten\cdots\ten a_n)=0$ if one of $a_1,\ldots,a_n$ belongs to
$A^0$.
\end{itemize}

Put $A^{>0}=\bigoplus_{i>0}A^i$. Then $A^{>0}$ is an
$A_\infty$-ideal of $A$: the multiplication $m_n$ takes value in
$A^{>0}$ if one of the $n$ arguments belongs to $A^{>0}$. Let
$\bar{A}$ denote the quotient $A_\infty$-algebra of $A/A^{>0}$. It
has vanishing higher multiplications, and is isomorphic to the
product of $r$ copies of $K$.

Let $1=e_1+\ldots+e_r$ be the unique (up to reordering)
decomposition of the identity of $A^0$ into the sum of primitive
orthogonal idempotents. Then each $P_i=e_i A$ is an
$A_\infty$-submodule of the free module of rank $1$:
\[m_n(e_i a\ten a_2\ten\cdots\ten a_n)=-(-1)^n e_i m_n(a\ten a_2\ten\cdots\ten a_n).\]
The subspace $e_i A^{>0}$ is an $A_\infty$-submodule of $P_i$, and
the quotient $A_\infty$-module $S_i=P_i/e_i A^{>0}$ is
$1$-dimensional with basis the class of $e_i$. We call
$S_1,\ldots,S_r$ \emph{simple modules} over $A$. Viewed as an
$A_\infty$-module over $A$, $\bar{A}$ is isomorphic to the direct
sum of $S_1,\ldots,S_r$. The following two lemmas are also proved in~\cite{KellerNicolas10} (in the form of dg algebras and dg modules).

\begin{lemma}\label{l:finite-dim-module} Let $A$ be a strictly unital minimal positive
$A_\infty$-algebra. Then \[\Hom_{\cd(A)}(\bar{A},\Sigma^m
\bar{A})=0\] for positive integers $m$.
\end{lemma}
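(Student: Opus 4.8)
The plan is to compute the Hom-spaces $\Hom_{\cd(A)}(\bar A,\Sigma^m\bar A)$ directly from the $A_\infty$-structure, using the fact that $\bar A$ decomposes as $\bigoplus_i S_i$ together with the defining positivity and minimality hypotheses on $A$. First I would reduce to the simple modules: it suffices to show $\Hom_{\cd(A)}(S_i,\Sigma^m S_j)=0$ for all $i,j$ and all $m>0$. To access these spaces I would use that the $S_i$ are already strictly unital minimal $A_\infty$-modules, so morphisms and homotopies in $\cd(A)$ can be read off from the $A_\infty$-morphism complex $\Hom^\bullet_A(S_i,S_j)$ whose cohomology computes $\bigoplus_m\Hom_{\cd(A)}(S_i,\Sigma^m S_j)$. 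The key point is to identify this complex: an $A_\infty$-morphism $f\colon S_i\to S_j$ is a family $f_n\colon S_i\otimes A^{\otimes n-1}\to S_j$ of degree $1-n$, and strict unitality forces $f_n$ to vanish as soon as one of the $A$-arguments lies in $A^0$. Since $A=A^0\oplus A^{>0}$ and $A^{>0}$ sits in strictly positive degrees, while $S_i$ and $S_j$ are concentrated in degree $0$, a nonzero $f_n$ would have to send $S_i\otimes (A^{>0})^{\otimes n-1}$ (living in degree $\geq n-1$) to $S_j$ (degree $0$) by a map of degree $1-n$ — so the image sits in degree $\geq n-1+(1-n)=0$ with equality only when the positive-degree arguments each have degree exactly $1$, i.e. lie in $A^1$.

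Carrying this through, the upshot is that the morphism complex is supported in non-positive degrees: any component contributing to $\Hom_{\cd(A)}(S_i,\Sigma^m S_j)$ with $m>0$ must come from maps landing in negative internal degree, which vanish because $S_j$ is concentrated in degree $0$. More carefully, I would run the standard degree bookkeeping: the space of $A_\infty$-morphisms of total degree $m$ is $\prod_{n\geq 1}\Hom^{1-n+?}$, and after imposing strict unitality all surviving summands have $m\leq 0$. Hence for $m>0$ the degree-$m$ part of the morphism complex $\Hom^\bullet_A(S_i,S_j)$ is already zero, so a fortiori its cohomology in that degree — which is $\Hom_{\cd(A)}(S_i,\Sigma^m S_j)$ — vanishes. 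Summing over $i,j$ gives $\Hom_{\cd(A)}(\bar A,\Sigma^m\bar A)=0$ for $m>0$, which is the assertion.

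Alternatively — and this may be cleaner to write — I would avoid the morphism complex and instead exploit the submodule filtration already set up in the text: $S_i=P_i/e_iA^{>0}$ with $P_i=e_iA$, giving a triangle $e_iA^{>0}\to P_i\to S_i\to\Sigma(e_iA^{>0})$. Applying $\Hom_{\cd(A)}(-,\Sigma^m\bar A)$ and using $\Hom_{\cd(A)}(A,\Sigma^m\bar A)=H^m(\bar A)=0$ for $m\neq 0$ (since $\bar A$ is concentrated in degree $0$), I would induct on a further filtration of $e_iA^{>0}$ by internal degree: each graded piece $e_iA^k$ (for $k>0$), viewed with the induced $A_\infty$-module structure, is a shift $\Sigma^{-k}$ of a module concentrated in degree $0$, namely a sum of copies of the $\bar A$-module $\bar A$ on which $A^{>0}$ acts trivially. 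The positivity hypothesis $m_n=0$ whenever an argument lies in $A^0$, applied on the module side, is exactly what guarantees these graded pieces assemble (via iterated triangles / a Milnor-type colimit) into $e_iA^{>0}$ without introducing new Hom's into $\Sigma^m\bar A$ for $m>0$.

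The main obstacle I anticipate is making the second approach's filtration argument precise: $A^{>0}$ need not be finite-dimensional, so $e_iA^{>0}$ is only a (possibly infinite) iterated extension of its graded pieces, and one must check that the relevant $\Hom$ commutes with the resulting colimit in the appropriate variable and that the $A_\infty$-module structure on each subquotient really is a shifted sum of copies of $\bar A$ — here the third bullet in the definition of minimal positive $A_\infty$-algebra (no multiplication involving an $A^0$-argument) is doing the essential work and needs to be invoked carefully, because it is what kills the potential differentials on the associated graded. The first (morphism-complex) approach sidesteps the colimit issue and reduces everything to an elementary degree count, so if the filtration argument gets technical I would fall back on that; in either case the crux is the same observation that strict unitality plus positivity confines all relevant data to non-positive cohomological degree.
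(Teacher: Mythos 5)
Your main (morphism-complex) argument rests on the claim that strict unitality forces a component $f_n\colon S_i\ten A^{\ten n-1}\to S_j$ to vanish as soon as one of the $A$-arguments lies in $A^0$. That is not what strict unitality says: it only controls arguments equal to the unit $1_A$, whereas $A^0\cong K^r$ is spanned by the $r$ idempotents, so for $r>1$ the degree count does not close. Concretely, a degree-$1$ pre-morphism may have a nonzero component $f_2\colon S_i\ten A^0\to S_j$ (and $f_3$ on $S_i\ten(A^0\ten A^1\oplus A^1\ten A^0)$, etc.), so the degree-$m$ part of the morphism complex you describe is \emph{not} zero for $m>0$; you would have to compute its cohomology, not merely observe that the complex vanishes there. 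What actually kills these components is the third bullet in the definition of a minimal positive $A_\infty$-algebra ($m_n=0$ whenever an argument lies in $A^0$), which lets one work with the bar construction \emph{relative to the semisimple subalgebra} $A^0$ --- i.e.\ treat $A$ as an $A_\infty$-category with $r$ objects --- after which the arguments range over $A^{>0}$ and your degree count is correct. But the identification of $\Hom_{\cd(A)}(S_i,\Sigma^m S_j)$ with the cohomology of this \emph{reduced} morphism complex is precisely the point that needs justification, and it is missing from your write-up.

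Your fallback argument has a different problem: it is circular. The triangle $e_iA^{>0}\to e_iA\to S_i$ reduces $\Hom(S_i,\Sigma^m\bar A)$ (for $m>0$) to $\Hom(e_iA^{>0},\Sigma^{m-1}\bar A)$, and the graded pieces of $e_iA^{>0}$ are shifted sums of simples $\Sigma^{-k}S_j$ with $k\geq 1$, so you are reduced to the vanishing of $\Hom(S_j,\Sigma^{m'}\bar A)$ for $m'=m-1+k\geq m$ --- instances of the very statement being proved, with no base case in sight (on top of the infinite-filtration issue you already flag). The paper avoids both problems by resolving $\bar A$ on the other side: $\bar A$ admits an $\Add(\Sigma^{-m}A\mid m\geq 0)$-resolution by shifted \emph{free} modules, for which $\Hom(\Sigma^{-k}A,\Sigma^m\bar A)=H^{m+k}(\bar A)=0$ for $m>0$ and $k\geq 0$ is immediate, and then invokes an $A_\infty$-version of Keller's Theorem 3.1(c) to pass the vanishing to $\bar A$ itself. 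If you want to keep your first approach, set up the $A^0$-relative framework explicitly; otherwise the free-resolution route is the cleaner fix.
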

\begin{proof} The graded module $\bar{A}$ over the positively graded algebra $A$
admits an $\Add(\Sigma^{-m} A|m\geq 0)$-resolution. Now the desired result follows 
from an $A_\infty$-version of~\cite[Theorem 3.1 (c)]{Keller94} 
(one can obtain this, for example,  by going to the enveloping dg algebra). \end{proof}

\begin{lemma} Let $A$ be a strictly unital minimal positive
$A_\infty$-algebra. Then the triangulated category $\cd_{fd}(A)$ is
generated by $\bar{A}$.
\end{lemma}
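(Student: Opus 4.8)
The statement to prove is that $\cd_{fd}(A)$ is generated by $\bar A = S_1 \oplus \cdots \oplus S_r$, where $A$ is a strictly unital minimal positive $A_\infty$-algebra. The plan is to reduce to a statement about the length of an $A_\infty$-module in $\cd_{fd}(A)$, and to induct on that length. First I would observe that $\bar A \cong S_1 \oplus \cdots \oplus S_r$ as an $A_\infty$-module, so $\thick(\bar A) = \thick(S_1,\ldots,S_r)$; it therefore suffices to show every object $M$ of $\cd_{fd}(A)$ lies in $\thick(S_1,\ldots,S_r)$. By Proposition~\ref{p:minimal-model} I may replace $M$ by a quasi-isomorphic strictly unital minimal $A_\infty$-module, so $M = H^*(M)$ is a finite-dimensional graded vector space with $m_1^M = 0$.

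**Key steps.** The induction is on $\dim_K M$. If $M = 0$ there is nothing to prove. If $M \neq 0$, pick a nonzero homogeneous element of $M$ of minimal degree, say degree $d$; because $A$ is positively graded and $m_n^M$ raises the "$A$-degree" part, the span of all homogeneous elements of $M$ in degrees $\le d$ — more precisely, I would take $N \subseteq M$ to be the graded subspace concentrated in the single lowest nonzero degree $d$ — is an $A_\infty$-submodule: the structure maps $m_n^M$ with $n \ge 2$ feed in arguments from $A^{>0}$ (arguments from $A^0$ act by scalars via the third bullet in the definition of positive $A_\infty$-algebra combined with strict unitality), hence land in degree $> d$, so they vanish on $N$ and there is nothing they can contribute; thus $N$ is a direct summand complement-free $A_\infty$-submodule whose structure is trivial, i.e. $N$ is a direct sum of shifts of the $S_i$. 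One then has a short exact sequence, i.e. a triangle $N \to M \to M/N \to \Sigma N$ in $\cd_{fd}(A)$, with $N \in \thick(S_1,\ldots,S_r)$ and $\dim_K(M/N) < \dim_K M$, so by the inductive hypothesis $M/N \in \thick(S_1,\ldots,S_r)$, and hence $M \in \thick(S_1,\ldots,S_r)$ as well.

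**The main obstacle.** The delicate point is the claim that the lowest-degree piece $N$ is genuinely an $A_\infty$-submodule and that the resulting quotient $M/N$ is again a (strictly unital, minimal) $A_\infty$-module representing a legitimate object of $\cd_{fd}(A)$ fitting into the asserted triangle. For the submodule claim one must check that for every $n \ge 2$ the map $m_n^M(x \otimes a_2 \otimes \cdots \otimes a_n)$ with $x \in N$ has positive $A$-degree contribution unless some $a_j \in A^0$, in which case strict unitality plus the positivity hypothesis kills the term or reduces it, so the output never re-enters degree $d$; this is where the positivity hypotheses on $A$ are essential. Once $N$ is seen to be an $A_\infty$-submodule with the subspace topology trivially inherited, the quotient $A_\infty$-module structure on $M/N$ is standard, and the inclusion $N \hookrightarrow M$ has a cone quasi-isomorphic to $M/N$, giving the triangle. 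The rest is bookkeeping with the induction.
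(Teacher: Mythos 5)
Your overall strategy --- pass to a finite-dimensional strictly unital minimal model of $M$ via Proposition~\ref{p:minimal-model}, split off the lowest-degree graded piece against the rest, and induct on $\dim_K M$ --- is exactly the paper's. But the key step is stated backwards and, as written, is false. You claim that $N=M^d$, the piece in the single lowest nonzero degree, is an $A_\infty$-submodule because the structure maps ``land in degree $>d$, so they vanish on $N$''. Landing in degree $>d$ is not vanishing: for $x\in M^d$ and $a\in A^{>0}$ the element $m_2^M(x\otimes a)$ has degree $|x|+|a|=d+|a|>d$, hence lies in $M^{>d}$ and is in general nonzero, so $N$ is \emph{not} closed under the module multiplications. (Already for $A=K\oplus K\epsilon$ with $\epsilon$ in degree $1$, $\epsilon^2=0$, and $M=Km_0\oplus Km_1$ with $|m_0|=0$, $|m_1|=1$, $m_2^M(m_0\otimes\epsilon)=m_1$, the span of $m_0$ is not a submodule.) What positivity of $A$ actually gives is the opposite inclusion: each $m_n^M$ with $n\geq 2$ sends $M^{\geq d}\otimes A^{\otimes (n-1)}$ into $M^{\geq d}$, since for $n=2$ the output degree is $|x|+|a|\geq |x|$, while for $n\geq 3$ the higher products vanish unless all algebra arguments lie in $A^{>0}$, in which case the output degree is at least $|x|+(n-1)+(2-n)>|x|$. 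Hence it is $M^{>d}$ that is the $A_\infty$-submodule, and $N=M^d$ is the \emph{quotient} $M/M^{>d}$, a module over $\bar A$ concentrated in a single degree and therefore a direct sum of shifts of the $S_i$.

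The repair is immediate and recovers the paper's argument: use the triangle $M^{>d}\rightarrow M\rightarrow M^{d}\rightarrow \Sigma M^{>d}$, note that $M^{d}\in\thick(S_1,\ldots,S_r)=\thick(\bar A)$ and $\dim_K M^{>d}<\dim_K M$, and induct. A secondary caveat: your parenthetical appeal to ``the third bullet in the definition of positive $A_\infty$-algebra'' only constrains the multiplications of $A$ itself, not those of the module $M$; to control $m_n^M$ when an argument lies in $A^0$ one needs strict unitality over the semisimple subalgebra $A^0$ (implicit in the framework of \cite{Lefevre03}), or simply the degree count above.
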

\begin{proof} By Proposition~\ref{p:minimal-model}, it suffices to prove that
a finite-dimensional strictly unital minimal $A_\infty$-module $M$
over $A$ is generated by $\bar{A}$. Up to shift we may assume that
$M^i=0$ for all negative integers $i$ and $M^0\neq 0$. Put
$M^{>0}=\bigoplus_{i>0}M^i$. Then $M^{>0}$ is an
$A_\infty$-submodule of $M$, and we have a triangle in $\cd(A)$
\[\xymatrix{M^{>0}\ar[r] & M\ar[r] & \bar{M}\ar[r] & \Sigma M^{>0}.}\]
Here $\bar{M}$ is the $A_\infty$-quotient module $M/M^{>0}$, and is
concentrated in degree $0$. Its structure of an $A_\infty$-module
comes from its structure of an $\bar{A}$-module, and hence is
generated by $\bar{A}$. Now by induction on the dimension of $M$ we
finish the proof.
\end{proof}

\subsection{The alternative construction}

Let $\Lambda$ be a finite-dimensional basic $K$-algebra. Let
$S_1,\ldots,S_r$ be a complete set of representatives of simple
$\Lambda$-modules.

By Theorem~\ref{t:morita}, there is a strictly unital minimal
positive $A_\infty$-algebra
$$\cs=\bigoplus_{m\in\mathbb{Z}}\Hom(\bigoplus_i
S_i,\Sigma^m\bigoplus_i S_i)$$ (the $A_{\infty}$-Koszul dual of
$\Lambda$) and a triangle equivalence
\[\xymatrix{\Phi:\cd^b(\mod\Lambda) \ar[r] & \per(\cs)}\]
taking $S_j$ ($j=1,\ldots,r$) to
$P_j=\bigoplus_{m\in\mathbb{Z}}\Hom(\bigoplus_i S_i,\Sigma^m S_j)$.

The indecomposable injective $\Lambda$-modules $I_1,\ldots,I_r$ are
characterized by the property
\[\Hom(S_i,\Sigma^m I_j)=\begin{cases} K & \text{ if } i=j \text{
and } m=0,\\
0 & \text{ otherwise.}
\end{cases}
\]
So their images $\Phi(I_1),\ldots,\Phi(I_r)$ under the equivalence
$\Phi$ are characterized by the property
\[\Hom(P_i,\Sigma^m \Phi(I_j))=\begin{cases} K & \text{ if } i=j \text{
and } m=0,\\
0 & \text{ otherwise.}
\end{cases}
\]
Therefore, $\Phi(I_1),\ldots,\Phi(I_r)$ are precisely the
indecomposable direct summands of $\bar{\cs}$. In other words, the
equivalence $\Phi$ restricts to a triangle equivalence
\[\xymatrix{\Phi|:\thick(\mathrm{D}(_{\Lambda}\Lambda)=\thick(I_1,\ldots,I_r)\ar[r] & \thick(\bar{\cs})=\cd_{fd}(\cs),}\]
where the last equality follows from
Lemma~\ref{l:finite-dim-module}.

Let $X_1,\ldots,X_r\in\mathcal{D}^{b}(\mathrm{mod}\Lambda)$ be a set
of simple-minded objects, \ie they satisfy the following conditions
\begin{itemize}
\item[(1)] $\Hom(X_i,\Sigma^m X_j)=0,~~\forall~m<0$,
\item[(2)] $\Hom(X_i,X_j)=\begin{cases} K & \text{if\ }i=j,\\
                                           0 & \text{otherwise},
                                           \end{cases}$
\item[(3)] $X_1,\ldots,X_r$ generates
$\mathcal{D}^{b}(\mod\Lambda)$.
\end{itemize}

On the graded algebra $\bigoplus_m \Hom(\bigoplus_i X_i,\Sigma^m
\bigoplus_i X_i)$ there is a strictly unital minimal
$A_\infty$-algebra structure. We will denote this $A_\infty$-algebra
by $\cx$. The conditions (1) and (2) imply that $\cx$ is positive,
while it follows from condition (3) that there is a triangle
equivalence
\[\cd(\cs)\longrightarrow \cd(\cx).\] This equivalence restricts to
triangle equivalences
\[\Psi:\per(\cs)\longrightarrow \per(\cx),\]
\[\Psi|:\cd_{fd}(\cs)\longrightarrow\cd_{fd}(\cx).\]
Thus we have the following commutative triangles of triangle
equivalences
\[\xymatrix{\cd^b(\mod\Lambda) \ar[r]^{\Phi}\ar[d]^{\Psi\circ\Phi} & \per(\cs)\ar[ld]^{\Psi}\\
\per(\cx)}
\xymatrix{\thick(I_1,\ldots,I_r) \ar[r]^{\Phi|}\ar[d]^{(\Psi\circ\Phi)|}& \cd_{fd}(\cs)=\thick(\bar{\cs})\ar[ld]^{\Psi|}\\
\cd_{fd}(\cx)=\thick(\bar{\cx}).}\]

Associated with $X_1,\ldots,X_r$ there is the decomposition
$1=id_{X_1}+\ldots+id_{X_r}$ of the identity of $\cx^0$ into the sum
of primitive orthogonal idempotents. Let $Y_1,\ldots,Y_r$ be
corresponding simple modules over $\cx$, and let $T_1,\ldots,T_r$ be
their images under a quasi-inverse of the equivalence
$(\Psi\circ\Phi)|$. Put $T=\bigoplus_i T_i$.

\begin{lemma}[Lemma~\ref{L:T} and Lemma~\ref{L:vanishingofposext}]\label{l:new-construction}
\begin{itemize}
\item[a)] $T$ generates $\thick(I_1,\ldots,I_r)$.
\item[b)] For $1\leq i,j\leq r$, and
$m\in\mathbb{Z}$,
\[\Hom(X_j,\Sigma^m T_i)=\begin{cases} K & \text{if }i=j\text{ and } m=0,\\
                                           0 & \text{otherwise}.
                                           \end{cases}\]

\item[c)] $T$ is isomorphic to
a bounded complex of finitely generated injectives.

\item[d)] Let $C$ be an object of
$\mathcal{D}^{-}(\mod \Lambda)$. If $\Hom(C,\Sigma^m T)=0$ for all
$m\in\mathbb{Z}$, then $C=0$.

\item[e)] $\Hom(T,\Sigma^m T)=0$ for $m>0$.
\end{itemize}
\end{lemma}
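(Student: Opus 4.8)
The plan is to deduce all five assertions by transporting the situation along the equivalences set up above. First I would record three facts from the construction: the equivalence $\Psi\circ\Phi$ carries $X_j$ to the indecomposable projective $\cx$-module $Q_j=e_j\cx$; its restriction $(\Psi\circ\Phi)|$ is a triangle equivalence $\thick(I_1,\ldots,I_r)\to\cd_{fd}(\cx)=\thick(\bar{\cx})$, and the chosen quasi-inverse $G$ is the functor used to define the $T_i$, so that $T_i\cong GY_i$ and $T=\bigoplus_iT_i\cong G\bar{\cx}$ already lies in $\thick(I_1,\ldots,I_r)$; finally, $\bar{\cx}\cong Y_1\oplus\cdots\oplus Y_r$ as an $A_\infty$-module over $\cx$.

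Parts (a), (b), (e) and (c) then follow readily from these facts. For (a), $G$ is a triangle equivalence and $\cd_{fd}(\cx)$ is generated by $\bar{\cx}$ (the lemma stating that $\cd_{fd}(A)$ is generated by $\bar{A}$ for a minimal positive $A_\infty$-algebra $A$), so $T=G\bar{\cx}$ generates $G(\cd_{fd}(\cx))=\thick(I_1,\ldots,I_r)$. For (b), applying $\Psi\circ\Phi$ gives
\[\Hom(X_j,\Sigma^m T_i)\cong\Hom_{\cd(\cx)}(Q_j,\Sigma^m Y_i)=(H^m Y_i)e_j,\]
which is $K$ for $i=j$, $m=0$ and $0$ otherwise, because $Y_i$ is one-dimensional, concentrated in degree $0$, and annihilated by $e_j$ for $j\neq i$. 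For (e), since $(\Psi\circ\Phi)|$ is fully faithful and $\bigoplus_iY_i\cong\bar{\cx}$,
\[\Hom(T,\Sigma^m T)=\bigoplus_{i,j}\Hom(T_i,\Sigma^m T_j)\cong\Hom_{\cd(\cx)}(\bar{\cx},\Sigma^m\bar{\cx}),\]
which vanishes for $m>0$ by Lemma~\ref{l:finite-dim-module} applied to $\cx$. For (c), from $T\in\thick(I_1,\ldots,I_r)=\thick(\mathrm{D}(_{\Lambda}\Lambda))$ we get $\nu^{-1}T\in\per(\Lambda)$, which is triangle equivalent to the homotopy category of bounded complexes of finitely generated projectives; hence $\nu^{-1}T$, and therefore $T\cong\nu(\nu^{-1}T)$, is isomorphic to a bounded complex of finitely generated injectives.

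The step I expect to be the main obstacle is (d): here $C$ lies in $\cd^{-}(\mod\Lambda)$, which is outside the range of the $A_\infty$-equivalence (available only on $\cd^b(\mod\Lambda)$), so one cannot transport $C$ to $\cd(\cx)$ directly. The way around is to use (a): since $\thick(T_1,\ldots,T_r)=\thick(I_1,\ldots,I_r)$ contains $\mathrm{D}(_{\Lambda}\Lambda)$, the hypothesis $\Hom(C,\Sigma^mT)=0$ forces $\Hom(C,\Sigma^m\mathrm{D}(_{\Lambda}\Lambda))=0$ for all $m$; and as $\mathrm{D}(_{\Lambda}\Lambda)$ is the injective cogenerator one has $\RHom_{\Lambda}(C,\mathrm{D}(_{\Lambda}\Lambda))\cong\mathrm{D}C$, so $\mathrm{D}(H^{-m}C)=0$ for all $m$, whence $C=0$ since the cohomologies of $C$ are finite-dimensional. (Alternatively one can copy Rickard's proof of \cite[Lemma 5.8]{Rickard02}, but the reduction via (a) is shorter.)
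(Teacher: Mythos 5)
Your proposal is correct and follows exactly the route of the paper's (very terse) proof: parts a), b), e) by transporting along the equivalence $\Psi\circ\Phi$ and invoking Lemma~\ref{l:finite-dim-module} and the generation of $\cd_{fd}(\cx)$ by $\bar{\cx}$, part c) from $T\in\thick(\mathrm{D}({}_{\Lambda}\Lambda))$, and part d) deduced from a) via $\RHom_\Lambda(C,\mathrm{D}({}_\Lambda\Lambda))\cong \mathrm{D}(C)$. The details you supply (in particular the duality argument for d), which the paper leaves implicit) are accurate.
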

\begin{proof} a) b) e) hold because they hold after
applying the triangle equivalence $\Psi\circ\Phi$. c) is trivial. d)
follows from a).
\end{proof}

\begin{remark} From the appendix we see that, from the viewpoint of derived categories, finite-dimensional dg algebras (whose cohomology is) concentrated in non-positive degrees behave like ordinary finite-dimensional algebras.
The construction of $T$ and Lemma~\ref{l:new-construction} can be easily generalized to this more general setting, namely, the setting that $\Lambda$ is a finite-dimensional dg algebra (whose cohomology is)
concentrated in non-positive degrees. In the statement of d) one replaces
$\cd^{-}(\mod\Lambda)$ by the full subcategory of $\cd(\Lambda)$ of
dg $\Lambda$-modules $M$ such that $H^m(M)$ vanishes for
sufficiently large $m$ and each $H^m(M)$ ($m\in\mathbb{Z}$) is
finite-dimensional.
\end{remark}

\begin{remark}  The $A_\infty$-algebra $\cx$ can be computed as a
minimal model of the dg endomorphism algebra of the direct sum of
projective resolutions of $X_1,\ldots,X_r$. In fact, it is Koszul
dual to the dg algebra $\tilde{\Gamma}$ introduced in
Section~\ref{ss:t-structure}. Thus knowing that $\tilde{\Gamma}$ is
finite-dimensional a priori one can construct it from $\cx$ using
the dual bar construction, and vice versa. In particular, if the restriction of the $A_\infty$-structure of $\cx$ in degrees $0$, $1$ and $2$ is known, it is not hard to work out the precise structure of $\Gamma=H^0\tilde\Gamma$.  However, this does not
help us to understand when $\tilde{\Gamma}$ has cohomology
concentrated in degree $0$.
\end{remark}

\section{Appendix: Finite-dimensional non-positive dg algebras}\label{s:appendix}
Let $K$ be a field. Let $A$ be a finite-dimensional non-positive dg
$K$-algebra (associative with $1$), i.e. $A=\bigoplus_{i\leq 0} A^i$
with each $A^i$ finite-dimensional $K$-space and $A^i=0$ for $i\ll
0$.

Let $\cc(A)$ denote the category of (right) dg modules over $A$,
$\mathcal{D}(A)$ denote the derived category, $\mathcal{D}_{fd}(A)$
denote the finite-dimensional derived category, and $\per(A)$ denote
the perfect derived category.

 The 0-th cohomology
$\bar{A}=H^{0}(A)$ of $A$ is an ordinary $K$-algebra. Let $\Mod
\bar{A}$ and $\mod\bar{A}$ denote the category of (right) modules
over $\bar{A}$ and its subcategory consisting of those
finite-dimensional modules. Let $\pi:A\rightarrow \bar{A}$ be the
canonical projection. We view $\Mod \bar{A}$ as a subcategory of
$\cc(A)$ via $\pi$.

The total cohomology $H^{*}(A)$ of $A$ is a finite-dimensional
graded algebra with multiplication induced from the multiplication
of $A$. Let $M$ be a dg $A$-module. Then the total cohomology
$H^{*}(M)$ carries a graded $H^{*}(A)$-module structure, and hence a
graded $\bar{A}=H^{0}(A)$-module structure. In particular, a stalk
dg $A$-module concentrated in degree 0 is an $\bar{A}$-module.

\subsection{The standard $t$-structure}\label{s:standard-t-structure} We follow~\cite{Amiot09}
and~\cite{KellerYang10}, where the dg algebra is not necessarily
finite-dimensional.

Let $M=\ldots\rightarrow M^{i-1}\stackrel{d^{i-1}}{\rightarrow}
M^i\stackrel{d^i}{\rightarrow}M^{i+1}\rightarrow\ldots$ be a dg
$A$-module. We define the {\em truncation functors} $\tau_{\leq 0}$
and $\tau_{\geq 1}$ as follows:
\begin{eqnarray*}
\tau_{\leq 0}M&=&\ldots\rightarrow
M^{-2}\stackrel{d^{-2}}{\rightarrow}M^{-1}\stackrel{d^{-1}}{\rightarrow}\mathrm{ker}d^{0}\rightarrow
0\rightarrow\ldots\\
\tau_{\geq 1}M&=&\ldots\rightarrow 0\rightarrow
M^{1}/\mathrm{im}d^{0}\stackrel{d^{1}}{\rightarrow}M^2\stackrel{d^2}{\rightarrow}M^3\rightarrow\ldots
\end{eqnarray*}
Thanks to the assumption that $A$ is non-positive, $\tau_{\leq 0}M$
and $\tau_{\geq 1}M$ are again dg $A$-modules. Moreover we have a
distinguished triangle in $\mathcal{D}(A)$ \[\tau_{\leq
0}M\rightarrow M\rightarrow \tau_{\geq 1}M\rightarrow
\Sigma\tau_{\leq 0}M.\] These two functors define a $t$-structure
$(\mathcal{D}^{\leq 0},\mathcal{D}^{\geq 0})$ on $\mathcal{D}(A)$,
where $\mathcal{D}^{\leq 0}$ is the subcategory of $\mathcal{D}(A)$
consisting of dg $A$-modules with vanishing cohomology in positive
degrees, and $\mathcal{D}^{\leq 0}$ is the subcategory of
$\mathcal{D}(A)$ consisting of dg $A$-modules with vanishing
cohomology in negative degrees.

Immediately from the definition of the $t$-structure
$(\mathcal{D}^{\leq 0},\mathcal{D}^{\geq 0})$, we see that the heart
$\mathcal{H}=\mathcal{D}^{\leq 0}\cap\mathcal{D}^{\geq 0}$ consists
of those dg $A$-modules whose cohomology are concentrated in degree
0. Thus the functor $H^0$ induces an equivalence
\begin{eqnarray*}H^0:\mathcal{H}&\longrightarrow&\Mod\bar{A}.\\
M&\mapsto& H^{0}(M)
\end{eqnarray*}
The $t$-structure $(\mathcal{D}^{\leq 0},\mathcal{D}^{\geq 0})$ of
$\mathcal{D}(A)$ restricts to a $t$-structure of
$\mathcal{D}_{fd}(A)$ with heart equivalent to $\mod\bar{A}$.
 It is easy to see that
$\cd_{fd}(A)$ is generated by this heart, and hence generated by the
simple $\bar{A}$-modules.

\subsection{Morita reduction}
 Let $d$ be the differential of $A$. Then $d(A^0)=0$.

 Let $e$ be an idempotent of $A$. For degree reasons, $e$ must belong to
 $A^0$, and the graded
 subspace $eA$ of $A$ is a dg submodule: $d(ea)=d(e)a+ed(a)=ed(a)$.
 Therefore for each
 decomposition $1=e_1+\ldots+e_n$ of unity into the sum of primitive
 orthogonal idempotents, we have a direct sum decomposition
 $A=e_1A\oplus\ldots\oplus e_nA$ of $A$ into indecomposable dg
 $A$-modules.
 Moreover, if $e$ and $e'$ are two idempotents of $A$ such that
 $eA\cong e'A$ as ordinary modules over the ordinary algebra $A$,
 then this isomorphism is also an isomorphism of dg modules. Indeed,
 there are two elements of $A$ such that $fg=e$ and $gf=e'$. Again
 for degree reasons, $f$ and $g$ belong to $A^0$. So they induce
 isomorphisms of dg $A$-modules: $eA\rightarrow e'A$, $a\mapsto ga$
 and $e'A\rightarrow eA$, $a\mapsto fa$.  It follows that the above
 decomposition of $A$ into the direct sum of indecomposable dg modules is essentially unique. Namely, if
 $1=e'_1+\ldots+e'_n$ is another decomposition of the unity into the
 sum of primitive orthogonal idempotents, then $m=n$ and up to
 reordering, $e_1A\cong e'_1A$, $\ldots$, $e_nA\cong e'_nA$.

Let $A$ and $A'$ be two finite-dimensional non-positive dg algebras.
If $A$ and $A'$ are Morita equivalent as ordinary algebras, then
$\cc(A)$ and $\cc(A')$ are equivalent.

\subsection{The perfect derived category}

Since $A$ is finite-dimensional (thus has finite-dimensional total
cohomology), it follows that $\mathrm{per}(A)$ is a triangulated
subcategory of $\mathcal{D}_{fd}(A)$.

We assume, as we may, that $A$ is basic. Let $1=e_1+\ldots+e_n$ be a
decomposition of $1$ in $A$ into the sum of primitive orthogonal
idempotents. Since $d(x)=\lambda_1 e_{i_1}+\ldots+\lambda_s e_{i_s}$
implies that $d(e_{i_j}x)=\lambda_j e_{i_j}$, it follows that the
intersection of the space with basis $e_1,\ldots,e_n$ with the image
of the differential $d$ has a basis consisting of some $e_i$'s, say
$e_{r+1},\ldots,e_n$. It is easy to see that $e_{r+1} A,\ldots,e_n
A$ are homotopic to zero.

We say that a dg $A$-module $M$ is \emph{strictly perfect} if its
underlying graded module is of the form $\bigoplus_{j=1}^{N}R_j$,
where each $R_j$ is isomorphic to a shifted copy of some $e_i A$
($1\leq i\leq n$), and if its differential is of the form
$d_{int}+\delta$, where $d_{int}$ is the direct sum of the
differential of the $R_j$'s, and $\delta$, as a degree $1$ map from
$\bigoplus_{j=1}^{N}R_j$ to itself, is a strictly upper triangular
matrix whose entries are in $A$. It is \emph{minimal perfect} if in
addition no $R_j$ is isomorphic to any shifted copy of
$e_{r+1}A,\ldots, e_n A$, and the entries of $\delta$ are in the
radical of $A$, \confer~\cite{Plamondon10}.

\begin{lemma}\label{l:minimal-perfect-resolution} Let $M$ be a
dg $A$-module belonging to $\per(A)$. Then $M$ is quasi-isomorphic
to a minimal perfect dg $A$-module.
\end{lemma}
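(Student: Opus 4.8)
The plan is to build the minimal perfect dg module by a two-stage process: first produce any strictly perfect representative, then "minimalize" it by cancelling out trivial summands and entries. For the first stage, recall that $M\in\per(A)=\thick(A_A)$, so $M$ is obtained from $A_A$ by finitely many steps of shifts, cones, and direct summands. Up to replacing $M$ by a quasi-isomorphic module I may arrange, by a standard induction on the number of such steps, that the underlying graded module of $M$ is a finite direct sum of shifted copies of the indecomposable summands $e_iA$ and that the differential is $d_{int}+\delta$ with $\delta$ strictly upper triangular with entries in $A$ — this is the definition of strictly perfect. (Concretely: a cone of a morphism $f\colon P\to Q$ between strictly perfect modules is strictly perfect, with $\delta$ built from the $\delta$'s of $P$, $Q$ and the components of $f$; a direct summand can be absorbed after a further quasi-isomorphism since over a finite-dimensional algebra idempotents split, and here by the Morita-reduction discussion in Section~4.2 idempotent splittings of the graded pieces are automatically compatible with the dg structure.) So the issue is purely to pass from strictly perfect to \emph{minimal} perfect.

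For the second stage I carry out two reductions. \emph{Reduction A (removing the null-homotopic summands):} the module $M$ has, among its $R_j$'s, possibly some shifted copies of $e_{r+1}A,\dots,e_nA$, which were shown above to be homotopic to zero. I want to split these off. This is not automatic because $\delta$ may connect them to the other summands, but since they are null-homotopic as dg modules, any morphism into or out of them (in the derived category) is zero, and a Gaussian-elimination-type argument on the triangular matrix $\delta$ lets me change basis so that these summands become a direct complement; discarding them yields a quasi-isomorphic strictly perfect module with no shifted copy of $e_{r+1}A,\dots,e_nA$. \emph{Reduction B (radical entries):} now suppose some entry of $\delta$, say the component $R_j\to R_k$, is not in the radical of $A$; since $A$ is basic and $R_j$, $R_k$ are shifted copies of $e_lA$'s, a non-radical entry forces $R_j$ and $R_k$ to be shifted copies of the \emph{same} $e_lA$ and the entry to be (a scalar multiple of) an isomorphism $e_l\cdot$. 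Then a Gaussian elimination — the same move used in the theory of minimal models of complexes — cancels the pair $R_j$, $R_k$ against each other, producing a quasi-isomorphic strictly perfect dg module with strictly fewer summands, the new $\delta$-entries being (polynomial) combinations of old ones, and in particular one checks the change does not reintroduce copies of $e_{r+1}A,\dots,e_nA$. Since each application of Reduction B strictly decreases $N$, the process terminates, and the result is minimal perfect.

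The main obstacle is making the Gaussian-elimination steps in Reductions A and B rigorous \emph{as dg-module operations}: one must verify that the elementary base changes used to cancel a summand pair (or to split off a null-homotopic summand) are isomorphisms of the underlying graded modules \emph{commuting with the full differential $d_{int}+\delta$}, not merely with $d_{int}$. This is where the non-positivity of $A$ and the fact that $\delta$ is strictly upper triangular are essential: the relevant base-change matrices are themselves strictly-triangular perturbations of the identity, so they are invertible, and the degree constraints (all idempotents and the relevant iso-entries live in $A^0$, where $d$ vanishes) guarantee compatibility with $d$. Once this bookkeeping is set up, termination is immediate from the strict decrease of $N$ under Reduction B, and the output satisfies all three clauses in the definition of minimal perfect. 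I would also remark that this is exactly the dg analogue of the classical fact that a bounded complex of finitely generated projectives over a finite-dimensional algebra has a minimal subcomplex representative, and cite \cite{Plamondon10} for the pattern of argument.
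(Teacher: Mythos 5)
Your argument is correct and is essentially the paper's own proof, which consists of the single sentence that the assertion is proved ``as in \cite{Plamondon10}'' using exactly the two facts you isolate: that $e_1A,\dots,e_rA$ have local endomorphism algebras (so any non-radical entry of $\delta$ is invertible and can be cancelled by Gaussian elimination) and that $e_{r+1}A,\dots,e_nA$ are homotopic to zero. The one point where your write-up is looser than it could be is the splitting of direct summands in your first stage --- the Morita-reduction discussion concerns idempotents of $A$ itself rather than idempotents in the homotopy endomorphism ring of a strictly perfect complex, and the cleaner route is to minimalize first and then use Krull--Schmidt via the local endomorphism rings --- but this is precisely the bookkeeping absorbed into the citation of \cite{Plamondon10}.
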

\begin{proof} Bearing in mind that $e_1 A,\ldots, e_r A$ have local
 endomorphism algebras and $e_{r+1}A,\ldots, e_n A$ are homotopic to zero,
 we prove the assertion as in~\cite{Plamondon10}. \end{proof}

\subsection{Simple modules}\label{S:simpledg} We assume
that $A$ is basic and that $K$ is algebraically closed.

According to the preceding subsection, we may assume that there is a
decomposition $1=e_1+\ldots+e_r+e_{r+1}+\ldots+e_n$ of the unity of
$A$ into a sum of primitive orthogonal idempotents such that
$1=\bar{e}_1+\ldots+\bar{e}_r$ is a decomposition of $1$ in
$\bar{A}$ into a sum of primitive orthogonal idempotents.

Let $S_1,\ldots,S_r$ be a complete set of representatives of
isomorphism classes of simple $\bar{A}$-modules. Then
\[\mathcal{H}om_{A}(e_i A,S_{j})=\begin{cases} K & \text{if }i=j,\\
                                           0 & \text{otherwise}.
                                           \end{cases}\]
Here for two dg $A$-modules $M$ and $N$,  $\cHom_A(M,N)$ denotes the
complex whose degree $p$ component of consists of those $A$-linear
maps from $M$ to $N$ which are homogeneous of degree $p$, and whose
differential  takes a homogeneous map $f$ of degree $p$ to $d_N\circ
f-(-1)^p f\circ d_M$. Therefore we have
\[\Hom_{\cd(A)}(e_i A,\Sigma^m S_j)=\begin{cases} K & \text{ if } i=j \text{ and } m=0,\\ 0 & \text{ otherwise}.\end{cases}\]
Moreover, $\{e_1 A,\ldots,e_rA\}$ and $\{S_1,\ldots,S_r\}$
characterize each other by this property. On the one hand, if $M$ is
a dg $A$-module such that for some integer $1\leq j\leq r$
\[\Hom_{\cd(A)}(e_i A,\Sigma^m M)=\begin{cases} K & \text{ if } i=j \text{ and } m=0,\\ 0 & \text{ otherwise},\end{cases}\]
 then $M$ is isomorphic in $\cd(A)$ to $S_j$.
On the other hand, let $M$ be an object of $\per(A)$ such that for
some integer $1\leq i\leq r$
\[\Hom_{\cd(A)}(M,\Sigma^m S_j)=\begin{cases} K & \text{ if } i=j \text{ and } m=0,\\ 0 & \text{ otherwise}.\end{cases}\]
Then by replacing $M$ by its minimal perfect resolution
(Lemma~\ref{l:minimal-perfect-resolution}), we see that $M$ is
isomorphic in $\cd(A)$ to $e_i A$.

Further, recall from Section~\ref{s:standard-t-structure} that $\cd_{fd}(A)$ admits a standard $t$-structure whose heart is equivalent to $\mod\bar{A}$. This implies that the simple modules $S_1,\ldots,S_r$ form a set of simple-minded objects in $\cd_{fd}(A)$.

\subsection{The Nakayama functor}

For a complex $M$ of $K$-vector spaces, we define its dual as
$\mathrm{D}(M)=\mathcal{H}om_{K}(M,K)$, where the last $K$ is
considered as a complex concentrated in degree 0. One checks that
$\mathrm{D}$ defines a duality between finite-dimensional dg
$A$-modules and finite-dimensional $A^{op}$-modules..

Let $e$ be an idempotent of $A$ and $M$ a dg $A$-module. Then we
have a canonical isomorphism
\[\mathcal{H}om_{A}(eA,M)\cong Me.\]
If in addition each component of $M$ is finite-dimensional, we have
canonical isomorphisms
\[\mathcal{H}om_{A}(eA,M)\cong Me\cong \mathrm{D}\mathcal{H}om_{A}(M,\mathrm{D}(Ae)).\]

We define the Nakayama functor $\nu:\cc(A)\rightarrow\cc(A)$ by
$\nu(M)=\mathrm{D}\mathcal{H}om_{A}(M,A)$ \cite[Section
10]{Keller94}. We have canonical isomorphisms \[D\cHom_A(M,N)\cong
\cHom_A(N,\nu M)\] for strictly perfect dg $A$-module $M$ and any dg
$A$-module $N$. We have $\nu(eA)=\mathrm{D}(Ae)$ for an idempotent
$e$ of $A$, and the functor $\nu$ induces a triangle equivalences
between the subcategories $\per(A)$ and $\thick(\mathrm{D}(A))$ of
$\cc(A)$ with quasi-inverse given by
$\nu^{-1}(M)=\mathcal{H}om_{A}(\mathrm{D}(A),M)$.

 Let
$e_1,\ldots,e_r$ and $S_1,\ldots,S_r$ be as in the preceding
subsection. Then we have
\[\mathcal{H}om_{A}(S_j, \mathrm{D}(Ae_i))\cong D\cHom_A(e_i A, S_j)=\begin{cases} K & \text{if }i=j,\\
                                           0 & \text{otherwise}.
                                           \end{cases}\]
That is, \[\Hom_{\cd(A)}(S_j,\Sigma^m
\mathrm{D}(Ae_i))=\begin{cases} K & \text{ if } i=j \text{ and }
m=0,\\ 0 & \text{ otherwise}.\end{cases}\] Moreover,
$\{\mathrm{D}(Ae_1),\ldots,\mathrm{D}(Ae_r)\}$ and
$\{S_1,\ldots,S_r\}$ characterize each other in $\mathcal{D}(A)$ by
this property. This follows from the arguments in the preceding
subsection by applying the functors $\nu$ and $\nu^{-1}$.

\end{document}